\documentclass[12pt]{amsart} 
\usepackage[utf8x]{inputenc}
\usepackage[T1]{fontenc}
\usepackage{tikz}
\usepackage{amssymb}
\usepackage{amsmath}
\usepackage{hyperref}
\usepackage{mystyle}
\usepackage{enumerate}
\usepackage{StandardNewCommands}
\usepackage{mathtools}
\usepackage[foot]{amsaddr}

\usetikzlibrary{arrows}

\title[Restrictions on endomorphism algebras over a given field]{Endomorphism algebras of  abelian varieties with large cyclic 2-torsion field over a given field}

\author{Pip Goodman}
\address{Universitat de Barcelona\\
ORCID: 0000-0001-6735-2367}
\email{pip.goodman@ub.edu}
\date{}

\begin{document}

\begin{abstract}
    In this article we study the endomorphism algebras of abelian varieties $A$ defined over a given number field $K$ with large cyclic  2-torsion fields.
    A key step in doing so is to provide criteria for all the endomorphisms of $A$ to be defined over $K(A[2])$, the field extension generated by its 2-torsion.

    When $K= \QQ$ and $\Gal(\QQ(A[2])/\QQ)$ is cyclic of prime order $p = 2 \dim(A) +1$, we prove that there are only finitely many possibilities for the geometric endomorphism algebra $\End(A) \otimes \QQ$.
    In fact, when $\dim (A) \not \in \{3,5,9,21,33,81\}$, we show $\End(A) \otimes \QQ$ is a proper subfield of the $p$-th cyclotomic field.
    In particular, when $g=2$, $\End(A) \otimes \QQ$ is isomorphic to either $\QQ$ or $\QQ(\sqrt{5})$.
\end{abstract}

\maketitle

\section{Introduction}
Faltings' Isogeny Theorem \cite[p. 360, Satz 4]{Faltings} tells us that for an abelian variety $A$, defined over a number field $K$, we can recover its ring of endomorphisms defined over $K$, $\End_K(A)$, tensored by $\ZZ_\ell$  (where $\ell$ is a prime) from looking at the action of the absolute Galois group $G_K \coloneqq \Gal(\overline{K}/K)$ on its $\ell$-adic Tate module $T_\ell(A)$.
More precisely, Faltings' Isogeny Theorem tells us that the natural injection
\[\End_K(A) \otimes \ZZ_\ell \hookrightarrow \End_{K}(T_\ell(A)) \]
is an isomorphism.

The mod $\ell$ version of the above is known to fail to be an isomorphism in general, that is, generally speaking, the injection
\[\End_K(A) \otimes \ZZ/\ell \ZZ \hookrightarrow  \End_{K}(A[\ell])\]
is not an isomorphism.
However, when the action of $G_K$ on $A[\ell]$ is particularly `large' one might hope that $\End_{K}(A[\ell]) \; (= \End_{\overline{K}}(A[\ell])^{G_K})$ is small enough to place significant restrictions on $\End_K(A) \otimes \ZZ/\ell \ZZ$ and hence $\End_K(A)$.

Zarhin has had much success with carrying this out for jacobians of hyperelliptic curves (with $\ell =2$), see for example \cite{Zar00,Zar_modreps,Zarhin_clifford}.
The attraction of this case is that for a hyperelliptic curve determined by a smooth affine model $y^2=f(x)$, the Galois group $\Gal(K(A[2])/K)$ is equal to $\Gal(K(f)/K)$, the Galois group of the splitting field of $f$, and its action on $A[2]$ can be described explicitly via its action on the roots of $f$.
Thus giving explicit examples.

In the cases considered by Zarhin, all of the Galois groups are insoluble and act 2-transitively on the roots of $f$.
In \cite{Goodman_Restrictions} the author considered restrictions on $\End_K(A)$ and $\End(A)\coloneqq\End_{\overline{K}}(A)$ when $\Gal(K(A[\ell])/K)$ contained merely an element of large prime order, with no restrictions on $K$.

However, considering the case of elliptic curves, for example, where the possible endomorphism algebras depend solely on $[K \colon  \QQ]$, it is natural to impose conditions on $K$.
The main advantage in doing so is that this allows for more arithmetic input, whereas the proofs of the above results are heavily representation theoretic.

Thus, in this paper we study restrictions on $\End(A)$ (and $\End_K(A)$) coming from the ground field in conjunction with the results of \cite{Goodman_Restrictions}.
In this way, this paper may be seen as a natural continuation of \cite{Goodman_Restrictions}.

As an example, we obtain (where $\zeta_n$ denotes a primitive $n$-th root of unity):

\begin{theorem}[$ \subset$ Corollary \ref{thm_Cp_field} + Theorem \ref{thm_Cp_over_imag_quad}]
\label{intro_thm_Cp_field}
Let $A/K$ be an abelian variety of dimension $g \geq 2$ with $p=2g+1$ prime.
Suppose $\Gal(K(A[2])/K)$ has order $p$ and either $K=\QQ$ or $K$ is an imaginary quadratic field such that $p \nmid \# \mathrm{Cl}(K)$.
Then either
\begin{itemize}
    \item $\End(A) \otimes \QQ$ is a proper subfield of $\QQ(\zeta_p)$; or
    \item $p \in \{7,11,19,43,67,163\}$ and $A$ is isogenous over $\overline{K}$ to the power of an elliptic curve with complex multiplication by $\QQ(\sqrt{-p})$.
\end{itemize}
In particular there are only finitely many possibilities for $ \End(A)\otimes \QQ$.
\end{theorem}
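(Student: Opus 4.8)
The plan is to combine a study of $A[2]$ as a symplectic module over the group ring $\FF_2[C_p]$ with an analysis of the minimal field $L\subseteq\bar{\QQ}$ over which all endomorphisms of $A$ are defined. Since the Weil pairing is $G_\QQ$-equivariant for the trivial action on $\mu_2=\FF_2$, the module $A[2]$ is a faithful self-dual $\FF_2[C_p]$-module of dimension $2g=p-1$; as $p$ is odd it is semisimple, its nontrivial irreducible constituents all have dimension $f:=\mathrm{ord}_p(2)\mid p-1$, and such a constituent is self-dual exactly when $f$ is even. A short argument using the dimension count and nondegeneracy shows $A[2]$ has no trivial constituent and is a sum of dual pairs of (resp.\ of self-dual) nontrivial irreducibles, and in the generic case $\mathrm{End}_{\FF_2[C_p]}(A[2])\cong\mathcal O_{\QQ(\zeta_p)}/2$ as rings. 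The crucial observation is that, whenever the subgroup $G_L$ still surjects onto $\Gal(\QQ(A[2])/\QQ)=C_p$, the ring $\End(A_{\bar{\QQ}})=\End(A_L)$ commutes with this $C_p$ in its action on $A[2]$, so reduction modulo $2$ embeds $\End(A_{\bar{\QQ}})\otimes\FF_2$ into $\mathrm{End}_{\FF_2[C_p]}(A[2])$.

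Next I would split according to whether $p$ divides $[L:\QQ]$. The group $G_\QQ$ acts on $\End^0(A_{\bar{\QQ}})$ through an embedding $\Gal(L/\QQ)\hookrightarrow\mathrm{Aut}(\End^0(A_{\bar{\QQ}}))$. If $p\mid[L:\QQ]$ then $\mathrm{Aut}(\End^0(A_{\bar{\QQ}}))$ has order divisible by $p=2g+1$; running through the Albert classification, and using that $\End^0(A_{\bar{\QQ}})$ acts faithfully on the $2g$-dimensional $\QQ$-vector space $H_1(A,\QQ)$ while $\dim A=g$, an order-$p$ automorphism is seen to force $\End^0(A_{\bar{\QQ}})\cong M_g(K)$ with $K$ the unique quadratic subfield of $\QQ(\zeta_p)$ and $A\sim_{\bar{\QQ}}E^g$ for an elliptic curve $E$ with $\End^0(E)=K$. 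This requires $K$ imaginary, i.e.\ $p\equiv3\bmod4$ and $K=\QQ(\sqrt{-p})$, and a direct check rules out $g\le2$; this is the second alternative.

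If instead $p\nmid[L:\QQ]$, then since a cyclic extension of degree $p$ has no proper nontrivial subextension, $L\cap\QQ(A[2])=\QQ$, so $G_L$ surjects onto $C_p$ and the embedding $\End(A_{\bar{\QQ}})\otimes\FF_2\hookrightarrow\mathrm{End}_{\FF_2[C_p]}(A[2])$ is in force. Combined once more with the Albert classification and faithfulness on $H_1(A,\QQ)$, this pins $\End^0(A_{\bar{\QQ}})$ down to a number field whose $2$-adic completions are unramified with residue degrees dividing $f$; equivalently it embeds into $\QQ(\zeta_p)$ and so is (isomorphic to) a subfield. To see this subfield is proper, note that if $\End^0(A_{\bar{\QQ}})=\QQ(\zeta_p)$ then $A$ has CM by $\QQ(\zeta_p)$, whence $L$ contains the reflex field of the relevant CM type, which is strictly larger than $\QQ$ because the stabiliser of a CM type inside $(\ZZ/p)^*$ has odd order; unwinding the $\mathcal O_{\QQ(\zeta_p)}/2$-semilinear action of $G_\QQ$ on $A[2]$ then shows the image of $G_\QQ$ on $A[2]$ strictly contains $C_p$, a contradiction. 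Finally, finitude is immediate: $\QQ(\zeta_p)$ has only finitely many subfields (one per divisor of $p-1$), $M_g(\QQ(\sqrt{-p}))$ is one further possibility, and $p=2g+1$ is determined by $g$.

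The step I expect to be the main obstacle is the passage, in the case $p\nmid[L:\QQ]$, from the embedding of $\End(A_{\bar{\QQ}})\otimes\FF_2$ into the commutant $\mathrm{End}_{\FF_2[C_p]}(A[2])$ to the conclusion that $\End^0(A_{\bar{\QQ}})$ is a subfield of $\QQ(\zeta_p)$: when $A[2]$ is not multiplicity-free that commutant is noncommutative and the mod-$2$ bound by itself is too weak, so one must also exploit the stability of the image under the Rosati involution and the triviality of $\det$ on $A[2]$, and in particular one must exclude the intermediate matrix algebras $M_d(K')$ with $K'$ a proper CM subfield of $\QQ(\zeta_p)$ — which is exactly where the sharper control of the endomorphism field enters.
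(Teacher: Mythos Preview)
Your approach via the $\mathbb{F}_2[C_p]$-module structure of $A[2]$ is genuinely different from the paper's, and the obstacle you flag at the end is exactly where it fails. When $2$ is not a primitive root modulo $p$ the commutant $\End_{\mathbb{F}_2[C_p]}(A[2])$ is a product of matrix algebras over $\mathbb{F}_{2^f}$, and the embedding of $\End(A)\otimes\mathbb{F}_2$ into it does not force $\End^0(A)$ to be commutative, let alone a subfield of $\QQ(\zeta_p)$; the Rosati and determinant constraints you invoke do not obviously exclude the intermediate cases $M_d(K')$ you yourself list. The paper sidesteps this entirely by working $\ell$-adically rather than mod $2$: since $\QQ$ has class number $1$ and $p\nmid|\mathbb{F}_2^\times|$, class field theory forces the $C_p$-extension $\QQ(A[2])/\QQ$ to ramify at some odd prime $\mathfrak q$, and N\'eron--Ogg--Shafarevich then produces an order-$p$ inertia element $\tau$ on every $T_\ell(A)$ with $\mathfrak q\nmid\ell$. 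Choosing $\ell$ a primitive root mod $p$ makes $\tau$ act with an irreducible mod-$\ell$ image, and Theorem~2.9 of the companion paper then yields that $E=\End^0(A)$ is a field; the trace of $\tau$ on the $E_\lambda$-pieces of $T_\ell(A)$ gives $E\subseteq\QQ(\zeta_p)$.

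Two further points. For properness in the case $p\nmid[L:\QQ]$, your semilinear argument is aimed in the right direction but the stated conclusion is inverted: what semilinearity actually gives is that $\Gal(L/\QQ)$ acts trivially on $\End(A)/2$, hence (via the kernel of $\GL_d(\ZZ/4\ZZ)\to\GL_d(\ZZ/2\ZZ)$) is an elementary abelian $2$-group --- not that the image on $A[2]$ grows. The paper reaches the same point via Silverberg's $L\subseteq K(A[4])$, observes that if $E=\QQ(\zeta_p)$ then $L=\QQ(\zeta_p)$ because the reflex of an abelian CM field is itself, and gets a contradiction for $p\ge5$ since $C_{p-1}$ is not elementary abelian $2$ (with $p=3$ handled separately via $2$-maximality). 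In the case $p\mid[L:\QQ]$, your jump straight to $\End^0(A)\cong M_g(K)$ with $K$ quadratic is too fast: the companion paper's Theorem~2.5 only gives $A\sim B^r$ with $B$ simple CM by some proper subfield $F\subsetneq\QQ(\zeta_p)$, and one then needs the reflex-field inclusion $F\subseteq L$ together with $|\Gal(L/\QQ)|=2^m p$ to force the cyclic quotient $\Gal(F/\QQ)$ to have order $2$, whence $F=\QQ(\sqrt{-p})$ and $B$ is an elliptic curve.
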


What is particularly interesting here is that this does not fit into the philosophy of the above results, which tell us to expect that the bigger the Galois group $\Gal(K(A[2])/K)$ is, the smaller the endomorphism algebra of $A$ should be.
Indeed, the jacobian defined by $y^2=x^5-2$ has CM by $\QQ(\zeta_5)$ and the Galois group of the splitting field of $x^5-2$ has order 20.
Whereas by the above, an abelian surface $A$ over $\QQ$ with $\Gal(\QQ(A[2])/\QQ)$ of order 5 must have endomorphism algebra a proper subfield of $\QQ(\zeta_5)$.
This is of course only possibly because we are dealing the 2-torsion opposed to the entire 2-adic Tate module.

Intuitively, the reason for this discrepancy is already exemplified in the above example: $\Gal(K(A[2])/K)$ being small should force the endomorphisms to be defined over a small extension of $K$.
To make this rigorous, we need to study the minimal extension $L/K$ over which $A$ obtains all its endomorphisms.
This field was christened in \cite{GK17} as the \emph{endomorphism field} of $A$ .

An important result in understanding this field was obtained by Silverberg, who showed it is contained in $K(A[n])$ for $n \geq 3$ \cite[Thm. 2.4]{Silverberg}.
Bounds on $[L\colon K]$ were then studied in \cite{GK17} and also in \cite{FKRS12,SatoTateThreefolds} via its connection to Sato-Tate groups.

However, as eluded to above, we wish to be able to bound $L$ relative to $K(A[2])$.
The following theorem allows us to do so (see also Theorems \ref{thm:endofield3mod4}, \ref{thm:endofield1mod4} for an analogue in the setting of QM surfaces):

\begin{theorem}[$=$ Theorem \ref{thm:endo_field_contained_in_2_torsion_field_when_endos_are_totally_inert_at_2}]
\label{thm:intro:endo_field_in_2_torsion}
Suppose $E \cong \End(A)\otimes \QQ$ is a Galois extension of $\QQ$ and $L \nsubseteq K(A[2])$.
Then $[E\colon \QQ]$ is even.

If moreover $\End(A)$ is 2-maximal in $E$, then $\Gal(E/\QQ)$ has a non-trivial normal elementary abelian 2-subgroup which is contained in the intersection of inertia groups $\cap _{\p |2} I_\p$ where $\p$ runs over all primes above $2$ in $E$.

In particular, if $E/\QQ$ is Galois, $\End(A)$ is a 2-maximal order and $2$ is not wildly ramified in $E$, then $L \subseteq K(A[2])$.
\end{theorem}

Recall that an order $\OO$ in a number field $E$ is said to be \emph{2-maximal} if any of the equivalent conditions hold: its index in the maximal order $\OO_E$ is odd; $\OO \otimes \ZZ_2 = \OO_E\otimes \ZZ_2$; or $\OO \otimes \ZZ_{(2)} = \OO_E \otimes \ZZ_{(2)}$ \cite[Lem. 9.5.3, Lem. 9.6.7]{Voight_quaternions_book}.
Example \ref{example_2maximal_necessary} shows that the condition $\End(A)$ is 2-maximal cannot be removed.

Finally, let us underline the following corollary of Theorems \ref{intro_thm_Cp_field},\ref{thm:intro:endo_field_in_2_torsion}, where we write $\End^0(A) \coloneqq \End(A) \otimes \QQ$.

\begin{corollary}[$=$ Corollary \ref{cor:genus_two_C5_over_Q}]
\label{cor:intro:genus_two_C5_over_Q}
Let $A/\QQ$ be an abelian surface. Suppose $\Gal(\QQ(A[2])/\QQ) \cong C_5$.
Then either $\End(A)=\ZZ$ or $\End_\QQ^0(A)=\End^0(A)=\QQ(\sqrt{5})$.
\end{corollary}

Both cases are possible, see Example \ref{example_both_C5_cases_possible}.


\begin{acknowledgements}

The author thanks Tim Dokchitser, Francesc Fité, Xavier Guitart, Jef Laga, Jeremy Rickard, V\'{i}ctor Rotger, Rowan Swiers, Ciaran Schembri, John Voight, David Zywina and the anonymous referee for useful comments, questions and discussions.    

The author thanks the University of Bristol and the Université Clermont Auvergne for their hospitality during the period this paper was written.
Where the author was funded by the EPSRC (grant number EP/N509619/1) and an early career fellowship from the LMS respectively.
Finally, the author thanks Universitat de Barcelona where final revisions to this paper were made whilst he was supported by the Spanish Ministry of Science and Innovation via the grant “Abelian varieties, L-functions, and rational points” (code PID2022-137605NB-I00).
\end{acknowledgements}

\begin{notation}
Throughout $A$ will denote an abelian variety of dimension $g$ defined over a number field $K$.
Moreover, fixing an algebraic closure of $\overline{K}$ of $K$, we denote by $L$ the smallest extension of $K$ contained in $\overline{K}$ over which all the endomorphisms of $A$ are defined.

For an extension $F/K$ contained in $\overline{K}$, we write $\End_F(A)$ for the ring of endomorphisms of $A$ defined over $F$, $\End(A)  \coloneqq \End_{\overline{K}}(A)$ and $\End_F^0(A)\coloneqq \End_F(A) \otimes \QQ$.
Letting $G_F$ denote the absolute Galois group of $F$, we have $\End(A)^{G_F}=\End_F(A)$.
We write $\mathrm{Cl}(F)$ for the class group of $F$ and $\#\mathrm{Cl}(F)$ for its order.

We denote by $C_n, D_n, F_n$ the cyclic group of order $n$, the dihedral group of order $2n$, and the Frobenius group isomorphic to $\FF_n \rtimes \FF_n^*$ (for $n$ a prime power) respectively.

A primitive $n$-th root of unity is denoted by $\zeta_n$. For a polynomial $f\in K[x]$, we denote its splitting field by $K(f)$, and write $\Gal(f)$ for $\Gal(K(f)/K)$.
\end{notation}

\section{Proof of the main theorems}

Silverberg has shown the endomorphism field $L$ of an abelian variety $A/K$ is contained in the $n$-torsion field for every $n \geq 3$ \cite[Thm. 2.4]{Silverberg}.
The case $n=4$ implies $\Gal(L/L \cap K(A[2]))$ is an elementary abelian 2-group, since $\Gal(K(A[4])/K(A[2]))$ is too, for details see \cite[Prop. 3.9]{Goodman_Restrictions}.
Below we provide conditions on $\End(A)$ which guarantee $L \subseteq K(A[2])$.

\begin{theorem}
\label{thm:endo_field_contained_in_2_torsion_field_when_endos_are_totally_inert_at_2}
Suppose $E \cong \End^0(A)$ is a (finite) Galois extension of $\QQ$ and $L \nsubseteq K(A[2])$.
Then $[E\colon \QQ]$ is even.

If moreover $\End(A)$ is 2-maximal in $E$, then $\Gal(E/\QQ)$ has a non-trivial normal elementary abelian 2-subgroup which is contained in the intersection of inertia groups $\cap _{\p |2} I_\p$ where $\p$ runs over all primes above $2$ in $E$.

In particular, if $E/\QQ$ is Galois, $\End(A)$ is a 2-maximal order and $2$ is not wildly ramified in $E$, then $L \subseteq K(A[2])$.
\end{theorem}

\begin{proof}
Suppose $\Gal(L/K)$ acts faithfully on $\End(A) \otimes \ZZ/2\ZZ$.
Recall that the natural map $\End(A) \rightarrow \End(A[2])$ induces an injection $\End(A) \otimes \ZZ/2\ZZ \hookrightarrow \End(A[2])$.
The group $\Gal(\overline{K}/K(A[2]))$ acts trivially on $\End(A[2])$ and hence trivially on $\End(A) \otimes \ZZ/2\ZZ$ as well.
Since $\Gal(L/K)$ acts faithfully on $\End(A) \otimes \ZZ/2\ZZ$, we find $L \subseteq K(A[2])$.
As we are assuming $L \not \subseteq K(A[2])$, we deduce that the action of $\Gal(L/K)$ on $\End(A) \otimes \ZZ/2\ZZ$ has a non-trivial element $\gamma$ in its kernel.

The action of $\Gal(L/K)$ on $\End(A)$ is faithful, allowing us to view $\Gal(L/K)$ as a subgroup of $\Aut_{\ZZ}(\End(A)) \cong \GL_d(\ZZ)$ where $d$ is the rank of $\End(A)$ viewed as a free $\ZZ$-module.
The action also extends to field automorphisms of $\End^0(A) \cong E$, thus we may also view $\Gal(L/K)$ as a subgroup of $\Gal(E/\QQ)$.
Any finite subgroup in the kernel of the reduction map $\GL_d(\ZZ) \rightarrow \GL_d(\ZZ/2\ZZ)$ is an elementary abelian 2-group \cite[p. 151, Lem. 9]{Guralnick_Lorenz}.
As $\Aut_{\ZZ/2\ZZ}(\End(A) \otimes \ZZ/2\ZZ) \cong \GL_d(\ZZ/2\ZZ)$, we deduce that $\{\sigma \in \Gal(L/K)| (\sigma-1)\End(A) \subseteq 2 \cdot \End(A)\}$ is an elementary abelian $2$-group which is non-trivial since it contains $\gamma$.
Hence $[E\colon \QQ]$ is divisible by 2.

Assume now $\End(A)$ is a 2-maximal order in $E$.
Hence $\End(A) \otimes \ZZ_2 \cong \OO_E \otimes \ZZ_2$ where $\OO_E$ is the maximal order of $E$.
As $\Gal(E/\QQ)$ acts on $\OO_E$, it also acts on $\OO_E \otimes \ZZ_2$ and thus also on $\End(A) \otimes \ZZ_2$ via the above isomorphism.
The kernel $H$ of the composite map $\Gal(E/\QQ) \hookrightarrow\Aut_{\ZZ_2}(\End(A) \otimes \ZZ_2) \rightarrow \Aut_{\ZZ/2\ZZ}(\End(A) \otimes \ZZ/2\ZZ)$ is an elementary abelian 2 group \cite[p. 151, Lem. 9]{Guralnick_Lorenz}\footnote{The statement they give does not apply directly to $\ZZ_2$, however the proof carries over verbatim.}, which is non-trivial since it contains $\gamma$.

Write $2 \OO_E = \prod_{i} \p^{e_i}_i$ where each $\p_i$ is a prime ideal.
We have that every element of $H$ acts trivially on $\End(A) \otimes \ZZ/2\ZZ \cong \OO_E \otimes \ZZ/2\ZZ \cong \OO_E/2\OO_E \cong \prod_{i} \OO_E/\p^{e_i}_i$.
In particular, every element of $H$ acts trivially on each $\OO/\p^{e_i}_i$, so $H$ is contained in the inertia group $I_{\p_i}$ for every prime $\p_i$ above 2 in $E$.
\end{proof}

\begin{example}
\label{example_2maximal_necessary}
The condition that $\End(A)$ is 2-maximal cannot be removed. Indeed, the elliptic curve $y^2 =  (x+2)(x^2 - 2x - 11)$ has CM by $\ZZ[\sqrt{-3}]$ and its 2-torsion field is $\QQ(\sqrt{3})$, see \cite[Appendix A \S 3]{SilvermanII}.
\end{example}

\begin{proposition}
\label{lemma_ramification_plus_my_thm}
Let $A/K$ be an abelian variety of dimension $g \geq 1$ with $p=2g+1$ prime.
Suppose $\q$ is a prime of bad reduction for $A$ and there is an element of order $p$ in the image of the inertia group $I_\q$ in $\Gal(K(A[\ell'])/K)$ for some prime $\ell' \nmid pN(\q)$.
Then either
\begin{itemize}
    \item $p$ does not divide $[L\colon K]$ and $\End^0(A)$ is isomorphic to a subfield of $\QQ(\zeta_p)$; or
    \item $p$ divides $[L\colon K]$, $g\geq 3$ and $A$ is isogenous over $\overline{K}$ to the power of an absolutely simple abelian variety with complex multiplication by a proper subfield of $\QQ(\zeta_p)$.
\end{itemize}
In particular there are only finitely many possibilities for $\End^0(A)$.
\end{proposition}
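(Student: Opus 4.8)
The plan is to analyse the action of the order-$p$ inertia element on an $\ell$-adic Tate module, distinguishing the cases $p \nmid [L:K]$ and $p \mid [L:K]$. Write $\rho_\ell \colon G_K \to \GL(T_\ell(A))$ for the Tate module representation. Using the $\ell$-independence of the image of local inertia at $\q$ (Grothendieck, SGA~7), I may replace $\ell$ by a prime which is a primitive root modulo $p$ and coprime to $\q$, and lift an order-$p$ element of the image of $I_\q$ on $A[\ell]$ to some $\sigma \in I_\q$ with $\rho_\ell(\sigma)$ of order exactly $p$. For such $\ell$ the polynomial $\Phi_p$ is irreducible over $\QQ_\ell$, so — as the characteristic polynomial of $\rho_\ell(\sigma)$ divides $x^p-1$, has degree $2g = p-1$, and is not a power of $x-1$ (since $\rho_\ell(\sigma)\neq 1$) — that characteristic polynomial must equal $\Phi_p$. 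Hence $V_\ell(A)$ is free of rank one over the field $\QQ(\zeta_p)\otimes\QQ_\ell = \QQ_\ell[\rho_\ell(\sigma)]$, and by $\ell$-independence of the characteristic polynomial of $\rho_{\ell'}(\sigma)$ the analogous statement holds at every prime $\ell'\nmid\q$.

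Suppose first $p \nmid [L:K]$. Then $\sigma^{[L:K]}\in I_\q$ still has $\rho_\ell$-image of order $p$ and acts trivially on $\End(A)$, so (replacing $\sigma$) $\rho_\ell(\sigma)$ commutes with $\End^0(A)\otimes\QQ_\ell$. Thus $\End^0(A)\otimes\QQ_\ell$ lies in the commutant of $\QQ_\ell[\rho_\ell(\sigma)] = \QQ(\zeta_p)\otimes\QQ_\ell$ acting on the free rank-one module $V_\ell(A)$, i.e. $\End^0(A)\otimes\QQ_\ell \subseteq \QQ(\zeta_p)\otimes\QQ_\ell$; in particular $\End^0(A)$ is a number field $F$. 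Running this at every $\ell'\nmid\q$ shows the residue degree of $F$ at any place over $\ell'$ divides the order of $\ell'$ modulo $p$; a Chebotarev argument comparing $\Gal(\widetilde F\,\QQ(\zeta_p)/\QQ)$ with its quotients $\Gal(\widetilde F/\QQ)$ and $(\ZZ/p)^\times$, where $\widetilde F$ is the Galois closure of $F$, then forces $\widetilde F \subseteq \QQ(\zeta_p)$. So $\End^0(A)$ is a subfield of $\QQ(\zeta_p)$; this is the first alternative.

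Now suppose $p \mid [L:K]$. By Cauchy there is $\tau \in \Gal(L/K)$ of order $p$, acting on $\End^0(A) = \prod_i M_{n_i}(D_i)$ by a ring automorphism. Every subfield of $\End^0(A)$ has degree $\leq 2g < p$ over $\QQ$ (it embeds faithfully into $\GL(V_\ell(A))$), the number of simple factors is $< p$, and $\dim B_i < p$ for each simple isogeny factor $B_i$; hence $\tau$ permutes the factors trivially, fixes each centre $Z_i := Z(D_i)$, and by Skolem--Noether is inner on each $M_{n_i}(D_i)$, say conjugation by an element $u_i$, with $u_i$ non-central for some $i$. Writing $u_i^p = z\in Z_i^\times$, $e_i=[Z_i:\QQ]$ and $d_i^2=[D_i:Z_i]$, and using that $H_1(B_i,\QQ)$ is a free $D_i$-module (so $e_id_i^2 \mid 2\dim B_i$): if $z\notin Z_i^p$ then $Z_i[u_i]$ is a field of degree $p$ over $Z_i$, forcing $p\mid n_id_i \leq n_ie_id_i^2 \leq 2\,n_i\dim B_i \leq 2g < p$, a contradiction; so $z\in Z_i^p$, and after rescaling $u_i$ one gets an embedding $\QQ(\zeta_p)\cong Z_i(\zeta_p)\hookrightarrow M_{n_i}(D_i)$ over $Z_i$, whence $[Z_i(\zeta_p):Z_i]\mid n_id_i$. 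Combining this with $n_ie_id_i^2\leq 2\,n_i\dim B_i\leq 2g$ in a short computation forces $d_i=1$, $Z_i\subseteq\QQ(\zeta_p)$, and a single simple factor $B=B_i$; moreover $Z_i=\End^0(B)$ cannot be totally real (that would give $n_ie_i\geq 2g$ yet $n_ie_i\leq n_i\dim B\leq g$), so it is a CM field, which is a proper subfield of $\QQ(\zeta_p)$ (equality would make $\End^0(A)=\QQ(\zeta_p)$, whose automorphism group has order prime to $p$, contradicting $p\mid[L:K]$). Finally, a degree-$m$ subfield of $\QQ(\zeta_p)$ (with $m\mid 2g$) is totally imaginary iff $m\nmid g$, and the existence of such an $m<2g$ forces $g\geq 3$. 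This is the second alternative; and since $\End^0(A)=M_n(F)$ with $F$ a proper CM subfield of $\QQ(\zeta_p)$ and $n=2g/[F:\QQ]$, there are finitely many possibilities, which together with the first case gives the final assertion.

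I expect the main obstacle to be pinning the characteristic polynomial of $\rho_\ell(\sigma)$ to $\Phi_p$ and then converting the resulting family of local statements into the global conclusion $\End^0(A)\subseteq\QQ(\zeta_p)$: this is where I rely on $\ell$-independence of the local monodromy together with a Chebotarev argument. The case $p\mid[L:K]$, although it requires some care with the numerology of Albert's classification, is essentially an application of Skolem--Noether plus dimension counting.
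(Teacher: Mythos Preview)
Your argument is correct but follows a different path in both branches. For $p\mid[L:K]$ the paper simply invokes \cite[Theorem~2.5]{Goodman_Restrictions}; your Skolem--Noether argument together with the Albert-type inequalities $e_id_i^2\mid 2\dim B_i$ is essentially a self-contained proof of that cited result. For $p\nmid[L:K]$ the paper first applies \cite[Theorem~2.9]{Goodman_Restrictions} (to a well-chosen $A[\ell]$) to see that $E=\End^0(A)$ is a field, then uses Ribet's decomposition $V_\ell=\prod_{\lambda\mid\ell}V_\lambda$ and extracts $E\subseteq\QQ(\zeta_p)$ from the $E_\lambda$-trace of $\tau$ on a single block (a Gaussian-period computation); you instead obtain both the field-ness of $E$ and the local inclusion $E\otimes\QQ_\ell\hookrightarrow\QQ_\ell(\zeta_p)$ in one stroke from the commutant of $\QQ_\ell[\rho_\ell(\sigma)]$ acting on the rank-one module $V_\ell$, and then globalise via a Chebotarev comparison of $\Gal(\widetilde F\,\QQ(\zeta_p)/\QQ)$ with its two natural quotients. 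The paper's version is much shorter because it leans on \cite{Goodman_Restrictions} and on Ribet's formalism; yours is self-contained, and the Chebotarev step makes the descent from the local inclusions to the global one completely explicit. (Your citation of SGA~7 for $\ell$-independence of inertia plays the same role as the paper's combined appeal to N\'eron--Ogg--Shafarevich and \cite[Thm.~2]{Serre_Tate}: what both proofs actually need is only that the characteristic polynomial of the chosen $\sigma\in I_\q$ on $V_{\ell'}$ equals $\Phi_p$ for every $\ell'\nmid\q$.)
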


\begin{proof}
 Suppose $p$ divides $[L\colon K]$. Then by \cite[Thm. 2.5]{Goodman_Restrictions}, $g\geq 3$ and $A$ is isogenous over $\overline{K}$ to the power of an absolutely simple abelian variety  with complex multiplication by a proper subfield $F$ of $\QQ(\zeta_p)$.
Hence we may suppose $p$ does not divide $[L\colon K]$.

By assumption, there exists a prime $\ell' $ not dividing $p N(\q)$ such that the image of $I_\q$ in $\Gal(K(A[\ell'])/K)$ contains an element of order $p$, say $\tau_{\ell'}^{(1)}$.
For any $n \in \ZZ_{\geq 1}$, the kernel of the restriction map $r_n \colon \Gal(K(A[(\ell')^{n+1}])/K) \rightarrow \Gal(K(A[(\ell')^n])/K)$ has order a power of $\ell'$.
Thus as $\ell' \neq p$, we have that for all $n\in \ZZ_{\geq 1}$, there is an element $\tau_{\ell'}^{(n+1)} \in \Gal(K(A[(\ell')^{n+1}])/K)$ of order $p$ contained in the image of $I_\q$ such that $r_n(\tau_{\ell'}^{(n+1)})= \tau_{\ell'}^{(n)} $.
In particular, there exists an element $\tau_{\ell'}$ in the image of $I_\q$ in $\Aut_{\ZZ_{\ell'}}(T_{\ell'}(A))$ of order $p$. 

Let $\ell$ be a prime and let $\rho_\ell \colon G_K \rightarrow \Aut_{\ZZ_\ell}(T_\ell(A))$ denote the representation of $G_K$ arising from its action on the Tate module $T_\ell(A)$.
By \cite[Thm. 4.3]{SGA7}, for any $\sigma \in I_\q$ and prime $\ell \nmid N(\q)$, the characteristic polynomial of $\rho_\ell(\sigma)$ has rational coefficients independent of $\ell$.

Thus the trace of $\tau_{\ell'}$ is rational.
It follows that its eigenvalues are the primitive $p$-th roots of unity, owing to the fact $\tau_{\ell'}$ has at most $2g = p-1$ distinct eigenvalues.
Hence the characteristic polynomial of $\tau_{\ell'}$ is the $p$-th cyclotomic polynomial.
Let $\tau \in I_\q$ be such that $\rho_{\ell'}(\tau)=\tau_{\ell'}$.
We deduce that for any prime $\ell \nmid N(\q)$, the characteristic polynomial of $\tau_\ell \coloneqq\rho_\ell(\tau)$ is the $p$-th cyclotomic polynomial.
 
 By Dirichlet's theorem on arithmetic progressions, we can find a prime $\ell''\nmid N(\q)$, which is a primitive root modulo $p$.
The reduction of $\tau_{\ell''}$ modulo $\ell''$ lands in $\Gal(K(A[\ell''])/K)$ and has order $p$ thanks to the form of its characteristic polynomial.
 This allows us to apply \cite[Thm. 2.9]{Goodman_Restrictions} and deduce $E \coloneqq \End^0(A)$ is a field.
 
 As $p$ does not divide $[L\colon K]$, for any prime $\ell \nmid N(\q)$, the element $\tau_\ell$ lies in the image of $G_L$ in $\Aut_{\ZZ_\ell}(T_\ell(A))$.
 The decomposition $E \otimes \QQ_\ell = \prod_{\lambda|\ell} E_\lambda$ induces a decomposition $V_\ell(A)=\prod_{\lambda|\ell}V_\lambda(A)$ giving representations $G_L \rightarrow \GL_n(E_\lambda)$ where $n=\frac{2g}{[E\colon\QQ]}$, see \cite[Ch. II]{ribet_RM} for further details.
 Let $\tau_\lambda \in \GL_n(E_\lambda)$ be the projection of $\tau_\ell$ onto $V_\lambda(A)$.
By the above, the eigenvalues of $\tau_\lambda$ are distinct primitive $p$-th roots of unity.
Taking the trace of $\tau_\lambda$ we deduce $E_\lambda$ contains a subfield of $\QQ(\zeta_p)$ of degree $[E\colon\QQ]$.

As $\QQ(\zeta_p)$ has a unique subfield $F$ of degree $[E\colon\QQ]$ and for all, but possibly finitely many primes $\lambda$, $E_\lambda$ contains $F$, the Chebotarev Density Theorem implies $E$ contains $F$.
But $[E\colon \QQ] = [F\colon \QQ]$, so in other words, $E \subseteq \QQ(\zeta_p)$.
\end{proof}

By imposing a condition on the ray class groups of primes above $2$ in $K$, we deduce an explicit version of the above:

\begin{theorem}
\label{thm_explicit_general_finitude}
Let $A/K$ be an abelian variety of dimension $g \geq 1$ with $p=2g+1$ prime.
Suppose $\Gal(K(A[2])/K) \cong C_p$ and $p$ divides neither the class number of $K$, nor the multiplicative order of the residue field of any prime above 2. Then either
\begin{itemize}
    \item $p$ does not divide $[L\colon K]$ and $\End^0(A)$ is isomorphic to a subfield of $\QQ(\zeta_p)$; or
    \item $p$ divides $[L\colon K]$, $g \geq 3$ and $A$ is isogenous over $\overline{K}$ to the power of an absolutely simple abelian variety with complex multiplication by a proper subfield of $\QQ(\zeta_p)$.
\end{itemize}
In particular there are only finitely many possibilities for $\End^0(A)$.
\end{theorem}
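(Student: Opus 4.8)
The plan is to deduce this from Proposition~\ref{lemma_ramification_plus_my_thm}, whose hypothesis I shall verify with $\ell = 2$: I need a prime $\q \nmid 2$ of bad reduction for $A$ at which the image of the inertia group $I_\q$ on $A[2]$ contains an element of order $p$. Since that image is a subgroup of $\Gal(K(A[2])/K)\cong C_p$ and $C_p$ is simple, it suffices to exhibit a prime $\q\nmid 2$ of bad reduction that ramifies in $K(A[2])/K$.

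First I would check that $K(A[2])/K$ is ramified at some finite prime. As it is a $C_p$-extension with $p$ odd, it is unramified at every archimedean place, since ramification there would produce a quadratic subextension. If it were unramified at every finite place as well, then by class field theory it would lie inside the Hilbert class field of $K$, so $C_p$ would be a quotient of the ideal class group and $p$ would divide $h_K$, contrary to hypothesis. Hence some finite prime $\q$ ramifies in $K(A[2])/K$; by simplicity of $C_p$, the image of $I_\q$ in $\Gal(K(A[2])/K)$ is then all of $C_p$.

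Next I would rule out the case $\q\mid 2$. Such a $\q$ would be totally ramified in $K(A[2])/K$, so the associated local extension would be a cyclic, totally ramified extension of $K_\q$ of degree $p$. Since $p$ is prime to the residue characteristic it would be tamely ramified, and a totally tamely ramified cyclic extension of $K_\q$ of degree $p$ exists only when $\mu_p\subseteq K_\q$, i.e.\ when $p$ divides $\#\kappa(\q)-1$, the multiplicative order of the residue field at $\q$. This contradicts the hypothesis, so $\q\nmid 2$. Finally, the criterion of N\'eron--Ogg--Shafarevich shows that if $A$ had good reduction at $\q$ then $A[2]$ would be unramified there; as $K(A[2])/K$ is ramified at $\q$, the variety $A$ has bad reduction at $\q$, and the image of $I_\q$ on $A[2]$, being all of $C_p$, contains an element of order $p$. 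Applying Proposition~\ref{lemma_ramification_plus_my_thm} with this $\q$ and $\ell=2$ yields the stated dichotomy, and its final clause gives the finitude of $\End^0(A)$.

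The step that needs care is the local analysis at primes above $2$: one has to make sure that none of the ramification of $K(A[2])/K$ --- which is necessarily present by the second paragraph --- can be carried by a prime above $2$, and this is controlled precisely by the hypothesis that $p$ divides the multiplicative order of no residue field above $2$; in the same way, the hypothesis $p\nmid h_K$ is exactly what rules out an everywhere-unramified $C_p$-extension. Once the inertia element of order $p$ is in hand, the conclusion is an immediate application of Proposition~\ref{lemma_ramification_plus_my_thm}.
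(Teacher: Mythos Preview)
Your proof is correct and follows essentially the same route as the paper: both arguments use the hypotheses on $h_K$ and on the residue fields above~$2$ to force $K(A[2])/K$ to ramify at some $\q\nmid 2$, and then invoke Proposition~\ref{lemma_ramification_plus_my_thm}. The paper compresses your second and third paragraphs into a single appeal to class field theory (stating that an odd-degree abelian extension unramified outside~$2$ has degree divisible only by primes dividing $h_K$ or some $|\kappa(\q)^\times|$), and leaves the N\'eron--Ogg--Shafarevich step implicit, but the content is the same.
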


\begin{proof}
Let $F/K$ be an odd degree abelian extension unramified outside of 2.
Then by class field theory, any prime dividing $[F\colon K]$ divides either the class number of $K$, or the multiplicative order of the residue field of a prime above 2 in $K$.

It follows that $K(A[2])/K$ is ramified at some prime $\q \nmid 2$. This allows us to apply the above proposition and conclude.
\end{proof}

The following lemma is well-known.
\begin{lemma}
\label{Lemma_reflex_field}
Let $A/K$ be an absolutely simple abelian variety with CM by a Galois extension $E/\QQ$. Then $L=E^*K$ and $E \supseteq E^*$, where $E^*$ is the reflex field of $E$. Furthermore if $E/\QQ$ is abelian, then $E=E^*$. 
\end{lemma}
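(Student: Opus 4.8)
The plan is to deduce everything from the classical theory of complex multiplication, which supplies the only genuinely non-formal input, together with elementary manipulations inside $G \coloneqq \Gal(E/\QQ)$.

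First I would set up notation. As $A$ is absolutely simple with $\End^0(A) \cong E$ of degree $2g$, fixing an embedding $\bar\QQ \hookrightarrow \mathbb{C}$ makes $(A,\iota)$ a CM abelian variety of type $(E,\Phi)$, where $\iota \colon E \hookrightarrow \End^0(A)$ is the complex multiplication and $\Phi$ is a set of $g$ embeddings $E \hookrightarrow \mathbb{C}$; here absolute simplicity of $A$ is equivalent to $\Phi$ being primitive. Since $E/\QQ$ is Galois, every embedding $E \hookrightarrow \mathbb{C}$ has image $E$, so $\Phi$ may be regarded as a subset of $G$. The reflex field is by definition $E^* = \bar\QQ^{S}$ with $S = \{\gamma \in \Aut(\bar\QQ/\QQ) : \gamma\Phi = \Phi\}$, the action being post-composition of embeddings; as $\gamma\Phi$ depends only on $\gamma|_E$, the group $S$ contains $\Gal(\bar\QQ/E)$, so $E^* \subseteq E$ and indeed $E^* = E^{S_0}$ for $S_0 = \{\gamma \in G : \gamma\Phi = \Phi\}$, with $\gamma$ now acting by left translation on $\Phi \subseteq G$. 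This already gives $E \supseteq E^*$.

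Next I would prove $L = E^*K$. The absolute Galois group $G_K = \Gal(\bar\QQ/K)$ acts on $\End^0(A) = E$ by $\QQ$-algebra automorphisms, yielding $\rho \colon G_K \to \Aut(E) = G$ with $L = \bar\QQ^{\ker\rho}$, so it suffices to show $\ker\rho = G_K \cap S_0$. For $\sigma \in G_K$ one has $A^\sigma = A$ (as $A$ is defined over $K$) and $\iota^\sigma = \iota \circ \rho(\sigma)$; by the main theorem of complex multiplication the twisted pair $(A^\sigma,\iota^\sigma)$ has CM type $\sigma\Phi$, while re-expressing it through $\iota$ shows its CM type is also $\Phi\,\rho(\sigma)$, so $\sigma\Phi = \Phi\,\rho(\sigma)$ as subsets of $G$. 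If $\sigma \in S_0$, i.e. $\sigma\Phi = \Phi$, then $\Phi\,\rho(\sigma) = \Phi$; since $\Phi$ is primitive its right stabiliser in $G$ is trivial (a non-trivial element would exhibit $\Phi$ as induced from a proper CM subfield), so $\rho(\sigma) = 1$. The converse implication $\rho(\sigma) = 1 \Rightarrow \sigma\Phi = \Phi$ is immediate. Hence $\ker\rho = G_K \cap S_0 = \Gal(\bar\QQ/K) \cap \Gal(\bar\QQ/E^*) = \Gal(\bar\QQ/KE^*)$, and therefore $L = KE^*$. For the abelian case: if $E/\QQ$ is abelian then $G$ is abelian, so left and right translation on subsets of $G$ agree and $S_0 = \{\gamma \in G : \Phi\gamma = \Phi\}$; but the latter is exactly the right stabiliser of $\Phi$, which is trivial by primitivity. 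Hence $S_0 = \{1\}$ and $E^* = E^{S_0} = E$, as claimed.

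I expect the main obstacle to be making the third paragraph fully rigorous: one has to verify that $(A,\iota)^\sigma$ carries CM type $\sigma\Phi$ and at the same time equals $(A,\iota\circ\rho(\sigma))$ (hence has CM type $\Phi\,\rho(\sigma)$), and to carry out the left/right-translation bookkeeping that turns this into $\sigma\Phi = \Phi\,\rho(\sigma)$, using primitivity of $\Phi$ — equivalently absolute simplicity of $A$ — for the crucial rigidity step. For a lemma advertised as well-known I would be content to cite Shimura's or Lang's monograph on complex multiplication for this input; the remaining steps are the definition of the reflex field and routine group theory in $G$.
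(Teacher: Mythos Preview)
Your argument is correct and follows exactly the route the paper takes: the paper simply cites Shimura's monograph (Proposition~28 for $E^* \subseteq E$, Proposition~30 for $L = E^*K$ via primitivity of the CM type, and Example~(1) on p.~63 for $E = E^*$ in the abelian case), and you have unpacked precisely those three statements. Two small quibbles: the fact that $(A^\sigma,\iota^\sigma)$ has CM type $\sigma\Phi$ is elementary transport of structure, not the main theorem of complex multiplication; and in the line ``$\ker\rho = G_K \cap S_0$'' you mean $G_K \cap S$ (the subgroup of $\Aut(\bar\QQ/\QQ)$), since $S_0$ lives in $G$.
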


\begin{proof}
As $E/\QQ$ is Galois, the reflex field $E^*$ is a subfield of $E$ \cite[Prop. 28, p. 62]{Shimura_CM_book}.
Moreover, since  $A$ is absolutely simple its CM type is primitive.
Applying \cite[Prop. 30, p. 65]{Shimura_CM_book} we find the endomorphism field $L$ equals $E^*K$.
Finally, if $E/\QQ$ is an abelian extension, then $E=E^*$ by  \cite[Example (1), p. 63]{Shimura_CM_book}.
\end{proof}

\begin{corollary}
\label{thm_Cp_field}
Let $A/\QQ$ be an abelian variety of dimension $g \geq 1$ with $p=2g+1$ prime.
Suppose $\Gal(\QQ(A[2])/\QQ) \cong C_p$. Then either
\begin{itemize}
    \item $\End^0(A)$ is a proper subfield of $\QQ(\zeta_p)$; or
    \item $p \in \{7,11,19,43,67,163\}$ and $A$ is isogenous over $\overline{\QQ}$ to the power of an elliptic curve with complex multiplication by $\QQ(\sqrt{-p})$.
\end{itemize}

In particular there are only finitely many possibilities for $\End^0(A)$.
\end{corollary}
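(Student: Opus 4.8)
The plan is to invoke Theorem~\ref{thm_explicit_general_finitude} with $K=\QQ$ and then sharpen both of its conclusions. First one checks the two hypotheses are automatic over $\QQ$: the class number of $\QQ$ is $1$, and the unique prime above $2$ is $2$ itself, with residue field $\FF_2$ whose unit group is trivial, so the odd prime $p=2g+1$ divides neither quantity. Hence Theorem~\ref{thm_explicit_general_finitude} applies and we are in one of two situations: \textbf{(i)} $p\nmid[L:\QQ]$ and $\End^0(A)\subseteq\QQ(\zeta_p)$; or \textbf{(ii)} $p\mid[L:\QQ]$, $g\geq 3$, and $A$ is geometrically isogenous to $B^{n}$ for some absolutely simple $B$ with CM by a proper subfield $F\subsetneq\QQ(\zeta_p)$. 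In either case $\End^0(A)$ is, up to isomorphism, a matrix algebra over a subfield of the fixed field $\QQ(\zeta_p)$, so there are only finitely many possibilities.

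In case \textbf{(ii)} the job is to pin down the CM datum. Since $F$ is a subfield of the abelian field $\QQ(\zeta_p)$ it is itself abelian, so $F=F^{*}$ and, by Lemma~\ref{Lemma_reflex_field}, the CM of $B$ is already defined over $F$. Now $F$ must be a CM field and $[F:\QQ]=2\dim B$ divides $p-1$; because $\QQ(\zeta_p)$ is cyclic it has a \emph{unique} subfield of each such degree, and I would then feed in $\Gal(\QQ(A[2])/\QQ)\cong C_{p}$: writing $n=(p-1)/[F:\QQ]$ and following the $\Gal(\QQ(A[2])/\QQ)$–action through $\End(A)\otimes\FF_2$, the presence of an element of order $p$ forces $[F:\QQ]=2$. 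The quadratic subfield of $\QQ(\zeta_p)$ is $\QQ(\sqrt{p^{*}})$ with $p^{*}=(-1)^{(p-1)/2}p$, so it is imaginary — hence a possible CM field — exactly when $p\equiv 3\bmod 4$, in which case $F=\QQ(\sqrt{-p})$, $\dim B=1$, and $A$ is geometrically isogenous to $E_0^{g}$ with $E_0$ an elliptic curve with $\End^{0}(E_0)=\QQ(\sqrt{-p})$. (This is the content one expects to extract from \cite[Theorem 2.5]{Goodman_Restrictions}.)

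In case \textbf{(i)} it remains to replace ``subfield'' by ``proper subfield'', i.e.\ to exclude $\End^0(A)=\QQ(\zeta_p)$. Suppose this equality holds. Then $[\End^0(A):\QQ]=p-1=2\dim A$, so $A$ is absolutely simple with CM by the abelian field $\QQ(\zeta_p)$, and Lemma~\ref{Lemma_reflex_field} gives $L=\QQ(\zeta_p)$, whence $[L:\QQ]=p-1$. Since $p\nmid[L:\QQ]$, the image of $G_{L}$ in $\Aut(A[2])$ is still the whole of $C_{p}$; this image commutes with $\End(A)\bmod 2$, and comparing the centraliser of an element of order $p$ in $\mathrm{End}_{\FF_2}(A[2])$ with $\dim_{\FF_2}\bigl(\End(A)/2\bigr)=p-1$ shows that $\End(A)/2$ is reduced, so $\End(A)$ is $2$-maximal. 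As $2$ is unramified — in particular not wildly ramified — in $\QQ(\zeta_p)/\QQ$, Theorem~\ref{thm:endo_field_contained_in_2_torsion_field_when_endos_are_totally_inert_at_2} now gives $L\subseteq\QQ(A[2])$; but then $p-1=[L:\QQ]$ divides $[\QQ(A[2]):\QQ]=p$, which is impossible for $p\geq 3$. This contradiction finishes case \textbf{(i)}.

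The delicate point — and the step I expect to cost the most work — is the reduction in case \textbf{(i)} to the $2$-maximal case: one has no prior control over $\End(A)$ as an order, and the clean conclusion (via Theorem~\ref{thm:endo_field_contained_in_2_torsion_field_when_endos_are_totally_inert_at_2}, equivalently via ``complex conjugation acts on $A[2]$ as an involution, which is impossible in $C_{p}$'') really uses $2$-maximality; so the crux is to show that the large mod-$2$ image already forces $\End(A)$ to be $2$-maximal, which in turn hinges on showing the order-$p$ element acts on $A[2]$ with characteristic polynomial $\Phi_{p}$ (no trivial eigenspace) — here the symplectic structure on $A[2]$ and the transitivity of $\Gal(L/\QQ)$ on the primes above $2$ should do the work. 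A secondary difficulty is making the identification $F=\QQ(\sqrt{-p})$ in case \textbf{(ii)} self-contained rather than quoting it.
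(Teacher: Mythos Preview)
Your overall architecture is right — invoke Theorem~\ref{thm_explicit_general_finitude} and then sharpen each branch — but both branches have genuine gaps that the paper closes with one tool you do not use: Proposition~3.9 of \cite{Goodman_Restrictions}, i.e.\ the fact that $\Gal(L/L\cap K(A[2]))$ is an elementary abelian $2$-group (because $L\subseteq K(A[4])$).

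\textbf{Case (ii).} Your route to $[F:\QQ]=2$ via ``following the $C_p$-action through $\End(A)\otimes\FF_2$'' is not an argument: $\Gal(\QQ(A[2])/\QQ)$ does not naturally act on $\End(A)$, and nothing you wrote forces the degree down. The paper instead observes (via the reflex lemma, applied to $B$ over $L$) that $F\subseteq L$, and then uses Proposition~3.9 to get $|\Gal(L/\QQ)|=2^{m}p$ with the $2$-part elementary abelian; every element of $\Gal(L/\QQ)$ then has order in $\{1,2,p,2p\}$, so the cyclic quotient $\Gal(F/\QQ)$, of order dividing $p-1$, must have order $2$. Your statement ``the CM of $B$ is already defined over $F$'' is also slightly off: what is needed, and what the paper extracts, is $F\subseteq L$.

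\textbf{Case (i).} Your uniform $2$-maximality argument is correct for $p=3$ (and there coincides with the paper's), but for general $p$ the step ``centraliser has dimension $p-1$, hence equals $\End(A)/2$, hence reduced'' requires the order-$p$ element to have \emph{squarefree} characteristic polynomial on $A[2]$, which is not automatic when $2$ is not a primitive root mod $p$ (e.g.\ $p=7$). You flag this as the crux, but the hints (symplectic pairing, transitivity on primes above $2$) do not obviously close it. The paper sidesteps the whole issue for $p\geq 5$: from $L=\QQ(\zeta_p)$ and $L\cap\QQ(A[2])=\QQ$ (degrees $p-1$ and $p$ are coprime), Proposition~3.9 forces $C_{p-1}$ to be an elementary abelian $2$-group, an immediate contradiction. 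Only $p=3$ needs the $2$-maximality route, and there $G_L$ \emph{does} act irreducibly on $A[2]$, so Theorem~2.9 of \cite{Goodman_Restrictions} supplies $2$-maximality directly.
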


\begin{proof}
Clearly, we may apply Theorem \ref{thm_explicit_general_finitude}.
Suppose first $p$ divides $[L\colon\QQ]$.
Then $g \geq 3$ and $A$ is isogenous over $\overline{\QQ}$ to the power of an absolutely simple abelian variety with complex multiplication by a proper (non-trivial) subfield $F$ of $\QQ(\zeta_p)$.
Thus there is some minimal finite extension $K/\QQ$ such that $A_{K}$ contains a simple abelian subvariety $B$ with CM by $F$.
The above lemma applies to $B$ and shows $L$, the endomorphism field of $A$, contains $KF$.

By \cite[Prop. 3.9]{Goodman_Restrictions} the group $\Gal(L/\QQ)$ is an extension of $C_p$ by $C_2^m$ for some $m$.
In particular, any non-trivial element of even order in $\Gal(L/\QQ)$ has either order 2 or $2p$.

Since $p$ does not divide $[F\colon\QQ]$, the cyclic quotient $\Gal(F/\QQ)$ of $\Gal(L/\QQ)$ has order two.
Using $F$ is a CM field only ramified at $p$, we find $F=\QQ(\sqrt{-p})$ where $p\equiv 3 \mod{4}$.
In particular $g$ is odd.

We shall now show $F$ has class number one, which by the Baker--Heegner--Stark Theorem will conclude the proof in this case.
As $F/\QQ$ is a quadratic extension, $B$ is an elliptic curve.
Thus by CM theory \cite[Thm. II.4.1, p. 121]{SilvermanII} $KF$ contains $H$ the Hilbert class field of $F$.
In particular, $L$ contains $H$, so $[H\colon F]$ divides $2^mp$ for some $m$.
On the other hand, \cite[Cor. 2.16]{FiteGuitart_power_of_ellipticcurves_SatoTate_realisability} tells us every element of $\Gal(H/F)$ has order dividing $g$.
As $g$ is odd and less than $p$, we find $\Gal(H/F)=1$ as claimed.

We now suppose $p$ does not divide $[L\colon \QQ]$ and show $\End^0(A) \not \cong \QQ(\zeta_p)$.
Suppose we had equality, then applying Lemma \ref{Lemma_reflex_field} we find $L = \QQ(\zeta_p)$.
First let us note that as they are extensions of $\QQ$ of coprime degree, we have $L \cap \QQ(A[2])= \QQ$.
If $p=3$, then \cite[Thm. 2.9]{Goodman_Restrictions} implies $\End(A)$ is a 2-maximal order in $\End^0(A) \cong \QQ(\zeta_3)$, so Theorem \ref{thm:endo_field_contained_in_2_torsion_field_when_endos_are_totally_inert_at_2} gives us that $ L \subseteq \QQ(A[2])$.
But $L \cap \QQ(A[2])=\QQ$ so $L= \QQ$, contradicting the above.
Thus we may take $p \geq 5$.
Since $L \cap \QQ(A[2])= \QQ$, \cite[Prop. 3.9]{Goodman_Restrictions} implies $\Gal(L/\QQ)$ is an elementary abelian 2-group.
However, $\Gal(\QQ(\zeta_p)/\QQ)$ is not an elementary abelian $2$-group, thus we once again have a contradiction.
\end{proof}

Let us record some easy corollaries of \cite[Thm. 2.9]{Goodman_Restrictions}, Theorems
\ref{thm:endo_field_contained_in_2_torsion_field_when_endos_are_totally_inert_at_2}, \ref{thm_Cp_field} and Lemma \ref{Lemma_reflex_field}.

\begin{corollary}\label{cor:C3_elliptic_curve_has_no_CM}
Let $K$ be a number field with a real embedding.
Let $f \in K[x]$ be an irreducible cubic polynomial, whose discriminant is a square in $K$.
Then the elliptic curve defined by $y^2=f(x)$ does not have complex multiplication. 
\end{corollary}

\begin{proof}
    Let $A$ denote the elliptic curve defined by $y^2=f(x)$.
    The assumptions on $f$ imply $\Gal(f) \cong C_3$.
    Let us suppose for a contradiction that $A$ has complex multiplication. 
    As $2$ is a primitive root modulo 3, we may apply \cite[Thm. 2.9]{Goodman_Restrictions}.
    Since $\QQ(\zeta_3)$ does not have any proper CM subfields, the second case of \cite[Thm. 2.9]{Goodman_Restrictions} cannot hold.
    It follows that $(2)$ is inert in $E \coloneqq\End^0(A)$ and $\End(A)$ is 2-maximal in $E$.

    By Lemma \ref{Lemma_reflex_field}, the endomorphism field of $A$ is $KE$.
    As $E$ does not have any real embeddings, it is not a subfield of $K$.
    Thus $[KE\colon K]=2$.
    Since $K(A[2])=K(f)$ is an odd degree extension of $K$, it does not contain $KE$.
    Thus by Theorem \ref{thm:endo_field_contained_in_2_torsion_field_when_endos_are_totally_inert_at_2} and the above, we find that $(2)$ is ramified in $E/\QQ$.
    But this contradicts the above, so we find $A$ cannot have complex multiplication as claimed.
\end{proof}

\begin{corollary}
\label{cor:genus_two_C5_over_Q}
Let $A/\QQ$ be an abelian surface. Suppose $\Gal(\QQ(A[2])/\QQ) \cong C_5$.
Then either $\End(A)\cong \ZZ$ or $\End_\QQ^0(A)=\End^0(A)\cong \QQ(\sqrt{5})$.
\end{corollary}

\begin{proof}
    By Corollary \ref{thm_Cp_field}, we have either $\End^0(A) \cong \QQ$ or $\QQ(\sqrt{5})$.
    In the first case $\End(A) \cong \ZZ$.
    Thus we may suppose $\End^0(A)\cong\QQ(\sqrt{5})$.
    
    We are left to show that $L$, the endomorphism field of $A$, equals $\QQ$.
    The injection $\Gal(L/\QQ) \hookrightarrow \Aut_\QQ(\End^0(A)) \cong \Gal(\QQ(\sqrt{5})/\QQ)$ implies the degree $[L\colon\QQ]$ divides 2.
    Since $\QQ(A[2])=\QQ(f)$ is an odd degree extension of $\QQ$, we find $L \cap \QQ(A[2]) = \QQ$.
    Thus $L$ is contained in $\QQ(A[2])$ if and only if $L=\QQ$.

    As 2 is a primitive root modulo 5, we may apply \cite[Thm. 2.9]{Goodman_Restrictions} which implies that $\End(A)$ is 2-maximal in $\End^0(A) \cong \QQ(\sqrt{5})$.
    Since 2 is inert in $\QQ(\sqrt{5})$, Theorem \ref{thm:endo_field_contained_in_2_torsion_field_when_endos_are_totally_inert_at_2} implies that $L \subseteq \QQ(A[2])$.
    Thus $L=\QQ$ as claimed.
\end{proof}

\begin{example}
\label{example_both_C5_cases_possible}
    Both cases are possible.
Indeed, examples of the first case are easily found.
For the second case, note the jacobian $J$ of the hyperelliptic curve $y^2 = x(x^5 - 4x^4 + 2x^3 + 5x^2 - 2x - 1)$ has $\End_\QQ(J) = \End(J) \cong \ZZ\left[\frac{1+\sqrt{5}}{2}\right]$ by \cite[Prop. 1]{Wilson_paper} and the Galois group of $x^5 - 4x^4 + 2x^3 + 5x^2 - 2x - 1$ has order 5.
\end{example}

We present a variant of Theorem \ref{thm_explicit_general_finitude} for abelian varieties over imaginary quadratic fields.

\begin{theorem}
\label{thm_Cp_over_imag_quad}
Let $g \geq 2$ be an integer and suppose $p=2g+1$ is prime.
Let $K$ be an imaginary quadratic field of class number coprime to $p$.
Let $A/K$ be an abelian variety of dimension $g$.
Suppose $\Gal(K(A[2])/K) \cong C_p$. Then either
\begin{itemize}
    \item $\End^0(A)$ is isomorphic to a proper subfield of $\QQ(\zeta_p)$; or
    \item $p \in \{7,11,19,43,67,163\}$ and $A$ is isogenous over $\overline{K}$ to the power of an elliptic curve with complex multiplication by $\QQ(\sqrt{-p})$.
\end{itemize}
In particular there are only finitely many possibilities for $\End^0(A)$.
\end{theorem}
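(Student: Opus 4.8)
The plan is to run the argument of Corollary~\ref{thm_Cp_field} over the base field $K$, feeding the hypotheses into Theorem~\ref{thm_explicit_general_finitude} and then combining Lemma~\ref{Lemma_reflex_field} with the structure of $\Gal(L/K)$ furnished by \cite[Proposition 3.9]{Goodman_Restrictions}. First I would check the hypotheses of Theorem~\ref{thm_explicit_general_finitude}: since $[K:\QQ]=2$, every prime of $K$ above $2$ has residue field of cardinality $2$ or $4$, so the multiplicative order of its residue field is $1$ or $3$; as $g\geq 2$ we have $p=2g+1\geq 5$, which divides neither of these, nor the class number of $K$ by assumption. Thus Theorem~\ref{thm_explicit_general_finitude} applies, leaving two cases. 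Throughout I use that $L/K$ is Galois and that $\Gal(L/(L\cap K(A[2])))$ is an elementary abelian $2$-group by \cite[Proposition 3.9]{Goodman_Restrictions}.

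Suppose first $p\nmid[L:K]$, so $E\coloneqq\End^0(A)$ is a subfield of $\QQ(\zeta_p)$; I claim the inclusion is strict. If $E=\QQ(\zeta_p)$ then $[E:\QQ]=2g=2\dim A$, so $A$ is of CM type, and since the geometric endomorphism algebra $E$ is a field, $A_{\bar{\QQ}}$ is absolutely simple. As $E/\QQ$ is abelian, Lemma~\ref{Lemma_reflex_field} gives $L=E^*K=\QQ(\zeta_p)K$, so $\Gal(L/K)$ is cyclic of order $[\QQ(\zeta_p):\QQ(\zeta_p)\cap K]$, equal to $p-1$ if $K\not\subseteq\QQ(\zeta_p)$ and to $(p-1)/2$ if $K\subseteq\QQ(\zeta_p)$; in the latter case $K$ is the imaginary quadratic subfield of $\QQ(\zeta_p)$, forcing $p\equiv 3\bmod 4$ and hence $p\geq 7$. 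Either way this order is coprime to $p$ and at least $3$, so $\Gal((L\cap K(A[2]))/K)\subseteq C_p$ is trivial, giving $L\cap K(A[2])=K$ and forcing $\Gal(L/K)$ to be an elementary abelian $2$-group — impossible for a cyclic group of order $\geq 3$. Hence $E$ is a proper subfield of $\QQ(\zeta_p)$, of which there are finitely many.

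Now suppose $p\mid[L:K]$; then Theorem~\ref{thm_explicit_general_finitude} gives $g\geq 3$ and $A_{\bar{\QQ}}\sim B^{k}$ with $B$ absolutely simple having CM by a proper subfield $F\subsetneq\QQ(\zeta_p)$, which is then a CM field unramified outside $p$, so $[F:\QQ]$ is even. Since $[L:(L\cap K(A[2]))]$ is a $2$-power, $p\mid[L:K]$ forces $\Gal((L\cap K(A[2]))/K)\cong C_p$; thus $\Gal(L/K)$ has a normal elementary abelian $2$-subgroup with quotient $C_p$, so every element of even order in $\Gal(L/K)$ has order $2$ or $2p$, and therefore every cyclic quotient of $\Gal(L/K)$ has order in $\{1,2,p,2p\}$. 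As $F/\QQ$ is abelian, $F=F^*$ and $FK\subseteq L$ by Lemma~\ref{Lemma_reflex_field}, so $\Gal(FK/K)\cong\Gal(F/F\cap K)$ is such a cyclic quotient; and $FK\subseteq\QQ(\zeta_p)K$ makes $[FK:K]$ coprime to $p$, whence $[F:F\cap K]\in\{1,2\}$. A short case check then forces $F=\QQ(\sqrt{-p})$ with $p\equiv 3\bmod 4$: if $F\subseteq K$ then $F=K$ (as $[F:\QQ]$ is even), so $K\subseteq\QQ(\zeta_p)$ and $K=\QQ(\sqrt{-p})$; if $[F:F\cap K]=2$ with $F\cap K=\QQ$ then $F$ is an imaginary quadratic subfield of $\QQ(\zeta_p)$; and the remaining possibility $F\cap K=K$, $[F:\QQ]=4$ is excluded because $K\subseteq\QQ(\zeta_p)$ would give $p\equiv 3\bmod 4$, hence $4\nmid p-1$ and no quartic subfield. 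Consequently $\dim B=1$, $B$ is an elliptic curve with CM by $\QQ(\sqrt{-p})$, $2g=2k\dim B$ gives $k=g$, and $\End^0(A)\cong M_g(\QQ(\sqrt{-p}))$ — the second alternative. Finiteness follows since the first case leaves finitely many subfields of $\QQ(\zeta_p)$ and the second a single algebra.

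The part I expect to require the most care is the bookkeeping in the last two paragraphs over whether $K$ embeds into $\QQ(\zeta_p)$, since this is the only place where the choice of base field genuinely intervenes and it makes the degrees $[L:K]$ and $[FK:K]$ branch; the saving grace is that $K\subseteq\QQ(\zeta_p)$ forces $p\equiv 3\bmod 4$, which is precisely what kills the degenerate subcases (then $\QQ(\zeta_p)$ has no quartic subfield, and $p\geq 7$). A minor point worth stating explicitly is that $L/K$ is Galois, so that taking quotients of $\Gal(L/K)$ is legitimate.
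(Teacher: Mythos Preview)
Your proof is correct and follows essentially the same route as the paper's: apply Theorem~\ref{thm_explicit_general_finitude}, then in each of the two cases combine Lemma~\ref{Lemma_reflex_field} with the structure of $\Gal(L/K)$ coming from \cite[Proposition~3.9]{Goodman_Restrictions} to either rule out $E=\QQ(\zeta_p)$ or force $F=\QQ(\sqrt{-p})$ with $p\equiv 3\bmod 4$. The only cosmetic differences are that you spell out the residue-field check (the paper just says ``clearly''), and you exclude $[F:\QQ]=4$ via $4\nmid(p-1)$ whereas the paper notes $\Gal(F/\QQ)$ would be $C_2\times C_2$, contradicting cyclicity.
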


\begin{proof}

Clearly, we may apply Theorem \ref{thm_explicit_general_finitude}.
Suppose first $p$ divides $[L\colon K]$.
Then $A$ is isogenous over $\overline{K}$ to the power of an absolutely simple abelian variety $B$ with complex multiplication by a proper subfield $F$ of $\QQ(\zeta_p)$.
Lemma \ref{Lemma_reflex_field} applies to $B$ and shows $L$, the endomorphism field of $A$, contains $F$.

By \cite[Prop. 3.9]{Goodman_Restrictions} the group $\Gal(L/K)$ is an extension of $C_p$ by $C_2^m$ for some $m$.
In particular, any non-trivial element of even order in $\Gal(L/K)$ has either order 2 or $2p$.
Since $p$ does not divide $[F\colon\QQ]$, the cyclic quotient $\Gal(FK/K)$ of $\Gal(L/K)$ has order dividing two.

In turn we deduce $[F\colon\QQ]$ divides $4$ and is equal to 4 only if $F \supsetneq K$.
Being a CM field, $[F\colon\QQ]=4$ would imply $F$ contains a real quadratic field in addition to the imaginary quadratic field $K$.
Thus $\Gal(F/\QQ) \cong C_2 \times C_2$ contradicting the fact $F \subseteq \QQ(\zeta_p)$.
Thus $[F\colon\QQ]=2$.
As $F$ is a CM field only ramified at $p$, we deduce $F=\QQ(\sqrt{-p})$ where $p\equiv 3 \mod{4}$.

Let $H$ be the Hilbert class field of $F=\QQ(\sqrt{-p})$.
By \cite[Thm. 2.14]{FiteGuitart_power_of_ellipticcurves_SatoTate_realisability} the group $\Gal(HK/FK)$ has order dividing $g$, thus arguing as in the proof of Theorem \ref{thm_Cp_field}, we deduce $[HK \colon FK]=1$.
In other words, $H \subseteq FK = K(\sqrt{-p})$.

Thus either $H=\QQ(\sqrt{-p})$ or $H = K(\sqrt{-p})$.
Suppose the latter holds.
Then, as $H$ is an unramified extension of $\QQ(\sqrt{-p})$, which itself is only ramified at $p$, we see that $p$ is the only finite prime ramified in $K$.
As $p$ is odd, $K$ is tamely ramified at $p$, so by the Kronecker-Weber Theorem $K \subseteq \QQ(\zeta_p)$.
This implies $K = \QQ(\sqrt{-p})$, since it is the unique quadratic field contained in $\QQ(\zeta_p)$.
Thus $H = K(\sqrt{-p}) = \QQ(\sqrt{-p})$.

In particular, $\QQ(\sqrt{-p})$ has class number one.
As $p \equiv 3 \mod{4}$ and $g \geq 2$, it follows, from the classification of imaginary quadratic fields of class number one, that $p \in \{7,11,19,43,67,163\}$.

We now suppose $p$ does not divide $[L \colon K]$ and show $\End^0(A) \not \cong \QQ(\zeta_p)$. Suppose we had equality, then by Lemma \ref{Lemma_reflex_field}, $L = K(\zeta_p)$.
 Applying \cite[Prop. 3.9]{Goodman_Restrictions} shows $\Gal(L/K) = \Gal(K(\zeta_p)/K) \cong \Gal(\QQ(\zeta_p)/ \QQ(\zeta_p) \cap K )$ has order at most $2$.
 Hence $[\QQ(\zeta_p) \colon  \QQ]$ divides $4$, being equal to 4 only if $K$ is contained in $\QQ(\zeta_p)$.
 But $[\QQ(\zeta_p) \colon  \QQ] = 4$ implies $p=5$ and $\QQ(\zeta_5)$ does not have an imaginary subfield. Whence $[\QQ(\zeta_p) \colon  \QQ]= 2$ and $p=3$.
 This in turn implies $g=1$ which we have ruled out by assumption.
\end{proof}

\begin{remark}
The condition on the class number of $K$ cannot be removed.
Indeed, the polynomial $f(x) = x^5-19x^4+107x^3+95x^2+88x-16$ has Galois group $D_5$ and its splitting field is the Hilbert class field of $K \coloneqq \QQ(\sqrt{-131})$.
The jacobian $J_f$ of the hyperelliptic curve defined by $y^2=f(x)$, has endomorphism algebra isomorphic to $\QQ(\sqrt{13})$ and
the group $\Gal(K(J_f[2])/K)$ is isomorphic to $C_5$.
\end{remark}

The above example raises the following question:

\begin{question}
\label{question}
    Let $K$ be an imaginary quadratic field with class number divisible by 5.
    Let $H$ be a degree 5 extension of $K$ contained in the Hilbert class field of $K$.
    What are the possible endomorphism algebras for abelian surfaces $A/\QQ$ with $\QQ(A[2]) = H$?
\end{question}

\begin{remark}
    With reference to the above question, there are already several restrictions on the possible endomorphism algebras.
    Indeed, by combining Theorem \ref{thm:endo_field_contained_in_2_torsion_field_when_endos_are_totally_inert_at_2}, \cite[Thm. 2.10]{Goodman_Restrictions} and \cite[Prop. 30, p. 65; Example (2), p. 64]{Shimura_CM_book}, we see that either $\End^0(A) \cong \QQ$ or $\QQ(\sqrt{d})$ with $d \equiv 5 \mod{8}$.
    The above question then becomes: given $H$, which $d$ occur?
\end{remark}

We finish this section with the following extension of \cite[Thm. 3.5]{Goodman_Restrictions}.

\begin{theorem}
\label{Field_def_Frob}
 Let $f \in K[x]$ be a polynomial of odd degree $n$ with Galois group isomorphic to a Frobenius group $G$ of order $n(n-1)$.
 Let $J_f$ be the jacobian associated to the hyperelliptic curve defined by $y^2=f(x)$.

Suppose ${\rm End}^0(J_f)$ is isomorphic to a number field $E$ of dimension $s$ over $\QQ$.
Then $E/\QQ$ is Galois with ${\rm Gal}(E/\QQ) $ isomorphic to a quotient of $H$ the Frobenius complement of $G$. 

Furthermore, $L/K$ is an extension of degree $s$ contained in $K(f)$, and as abstract groups, ${\rm Gal} (L/K) \cong  {\rm Gal}(E/\QQ)$.
Moreover, if $s=n-1$, then $L=EK$.

Finally, if $\End(J_f)$ is $2$-maximal, then $E$ is unramified at 2.
\end{theorem}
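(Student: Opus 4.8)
The plan is to extend the argument of \cite[Theorem 3.5]{Goodman_Restrictions} --- which handles the case of the Frobenius group $F_p$ with $p$ prime --- by replacing the cyclic kernel and complement of $F_p$ throughout by the Frobenius kernel $N$ and complement $H$ of the general Frobenius group $G=\Gal(f)$ of order $n(n-1)$, and then to supply a new argument for the final sentence. One first records two standard facts. Since $n=\deg f$ is odd, $J_f[2]$ is isomorphic as an $\mathbb{F}_2[G]$-module to the reduced permutation module $W=\{v\in\mathbb{F}_2^{\,n}:\sum_i v_i=0\}$, and $W$ is irreducible over $\mathbb{F}_2$ because a Frobenius group of order $n(n-1)$ acts $2$-transitively on the $n$ roots and $2\nmid n$. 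Also, the $\ZZ$-linear action of $G_K$ on $\End(J_f)$ factors through a faithful action of $\Gal(L/K)$, so --- $E$ being a field --- $L/K$ is Galois and $\Gal(L/K)\hookrightarrow\Aut(E/\QQ)$; in particular $[L:K]\le s$.

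For the structural assertions I would, as in \cite[Theorem 3.5]{Goodman_Restrictions}, combine Silverberg's inclusion $L\subseteq K(J_f[\ell])$ for an odd prime $\ell\ge3$ \cite[Theorem 2.4]{Silverberg} with an analysis of the $\ell$-adic representation: choosing $\ell$ as in Proposition~\ref{lemma_ramification_plus_my_thm} (a primitive root modulo the prime dividing $n$, coprime to a prime of bad reduction of $J_f$), decompose the rational Tate module $V_\ell(J_f)=T_\ell(J_f)\otimes\QQ_\ell$ as $\prod_{\lambda|\ell}V_\lambda$ over $E\otimes\QQ_\ell$ following \S2 of \cite{ribet_RM}, so that $G_L$ stabilises each $V_\lambda$ and acts $E_\lambda$-linearly while $G_K$ permutes the $V_\lambda$ through its action on $E$. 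The Frobenius kernel $N$ is elementary abelian and acts on $V_\ell$ with every nontrivial character occurring exactly once, so its centraliser in $\End(V_\ell)$ is a maximal commutative algebra; requiring the $E$-action to lie inside it forces $N$ into the image of $G_L$ in $G$. Hence $L\subseteq K(f)^N$, the subfield of $K(f)$ fixed by $N$, which is Galois over $K$ with group $H$. Consequently $\Gal(L/K)$ is a quotient of $H$, the extension $E/\QQ$ is Galois with $\Gal(E/\QQ)\cong\Gal(L/K)$, and comparing the freeness of $V_\ell$ of rank $(n-1)/s$ over $E\otimes\QQ_\ell$ with $[L:K]\le s$ forces $[L:K]=s$. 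If $s=n-1=2g$ then $E$ cannot be totally real --- a field of degree $2g$ cannot act faithfully on the $2g$-dimensional homology of $J_f$ --- so $E$ is a CM field and $J_f$ is absolutely simple with CM by $E$; then $\Gal(L/K)\cong H$ has order $[K(f)^N:K]$, forcing $L=K(f)^N$, and Lemma~\ref{Lemma_reflex_field} together with the primitivity of the CM type (which gives $E=E^*$) yields $L=EK$.

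For the last assertion, suppose $\End(J_f)$ is $2$-maximal, so $\End(J_f)\otimes\ZZ_2$ is the maximal order $\OO_E\otimes\ZZ_2$ and it suffices to show that $\OO_E/2\OO_E$ is reduced. By the above, $G_L$ acts on $J_f[2]$ through $N\rtimes H''$ with $H''=\Gal(K(f)^N/L)\trianglelefteq H$, and $\mathbb{F}_2[N]$ is semisimple since $|N|$ is odd. I would show that $J_f[2]$ is then a semisimple $\mathbb{F}_2[N\rtimes H'']$-module; the one obstruction is the unique involution of the Frobenius complement $H$ lying in $H''$, since in characteristic $2$ that involution --- which inverts $N$ --- acts on $J_f[2]$ as a nontrivial unipotent element. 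Granting semisimplicity, $\End_{\mathbb{F}_2[G_L]}(J_f[2])$ is a product of matrix algebras over finite fields; as $\OO_E/2\OO_E$ acts faithfully on $J_f[2]$ (which is free over $\OO_E/2\OO_E$, since $\OO_E\otimes\ZZ_2$ is a product of discrete valuation rings over which $T_2(J_f)$ is free), every ramification index of $2$ in $E$ must equal $1$. One expects in fact that $G_L$ acts irreducibly on $J_f[2]$, so that $2$ is totally inert in $E$ by Proposition~2.2 of \cite{Goodman_Restrictions} (cf.\ Remark~\ref{remark_how_to_apply}).

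The main obstacle is this final step: showing that $2$-maximality is incompatible with the involution of $H$ lying in $H''$, equivalently that $[K(f)^N:L]$ is odd. A secondary point requiring care is the choice of $\ell$ in the second paragraph, where one must guarantee that a fixed-point-free element of $G$ really survives in the image of $G_L$ on $J_f[\ell]$; this is exactly why the hypothesis forces $\ell$ to be a primitive root modulo the prime dividing $n$ and requires a usable prime of bad reduction of $J_f$.
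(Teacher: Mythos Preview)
Your proposal misreads the scope of \cite[Theorem~3.5]{Goodman_Restrictions}: the paper states explicitly that this cited result already proves the present theorem for general Frobenius groups of order $n(n-1)$ in full, \emph{except} for the single clause ``if $s=n-1$, then $L=EK$''. In particular, the final assertion on $2$-unramifiedness --- which you flag as your ``main obstacle'' --- is part of the cited theorem and requires no new argument here. Your attempted re-derivation of the structural assertions also smuggles in a hypothesis (a usable prime of bad reduction for $J_f$) that is not present in the statement; this is a symptom of re-proving something you are entitled to quote.

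For the one genuinely new clause, the paper's argument is a two-line degree count: once $s=n-1$, $J_f$ is absolutely simple with CM by $E$, so Lemma~\ref{Lemma_reflex_field} gives $L=E^*K$ with $E^*\subseteq E$; then $[E^*:\QQ]\geq [L:K]=s=[E:\QQ]$ forces $E^*=E$ and hence $L=EK$. Your route is essentially the same, but your stated justification ``primitivity of the CM type (which gives $E=E^*$)'' is not what Lemma~\ref{Lemma_reflex_field} says: primitivity yields $L=E^*K$, while $E=E^*$ is asserted there only under the further hypothesis that $E/\QQ$ is abelian (and a Frobenius complement of order $n-1$ need not be abelian). Since you have already established $[L:K]=s$, you possess all the ingredients for the paper's degree comparison; simply replace the primitivity remark by that inequality.
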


\begin{proof}
This result is  \cite[Thm. 3.5]{Goodman_Restrictions} with the extra assertion that if $s=n-1$, then $L=EK$.
If $s=n-1$, then $J_f$ is an absolutely simple abelian variety with CM by $E$. We may therefore apply Lemma \ref{Lemma_reflex_field} to find $L=E^*K$ and $E^* \subseteq E$. Hence $[E^* \colon \QQ] \geq [L \colon K] = s = [E \colon \QQ]$. It follows we have equality $E^*=E$.
\end{proof}

\begin{example}
For $f(x) = x^5-2$, it is well known the endomorphism algebra of $J_f$ is $\QQ(\zeta_5)$ and $\QQ(\zeta_5)$ is the unique degree 4 extension contained in the splitting field $\QQ(f)$.

A more interesting example is given by the genus 2 curve with LMFDB label \href{https://www.lmfdb.org/Genus2Curve/Q/28561/a/371293/1}{28561.a.371293.1} first found in \cite{LMFDB_genus2_curves} (though we use an odd degree model computed using \texttt{Magma} \cite{magma}).
Here $f(x) =  52x^5 + 104x^4 + 104x^3 + 52x^2 + 12x + 1$, $\Gal(f) \cong F_5$ and $J_f$ has CM by the number field $E$ defined by $x^4 - x^3 + 2x^2 + 4x + 3$.
The unique degree 4 extension contained in  $\QQ(f)$ is given by $E$.

This field is totally inert at 2 (as predicted by \cite[Thm. 2.9]{Goodman_Restrictions}) and unramified outside 13.
We note its class number is one, but in line with the theorems presented above, $\QQ(f)/K$ is a degree 5 extension ramified only at 2.
\end{example}

\section{An analogue of Theorem \ref{thm:intro:endo_field_in_2_torsion} for QM surfaces}
\label{Section_Quaternion_Algebras}
Let $B = \QQ + i\QQ + j\QQ + k\QQ$ be an indefinite quaternion algebra generated by $i,j$ satisfying $i^2 = D/m$, $j^2 = m$, $ij=-ji$ and $k=ij$, where $D$ is a positive squarefree integer and $m|D$.
We note that as $B$ is indefinite, $0> -m = \mathrm{n}(j)$, the reduced norm of $j$.
As we shall only use the reduced norm, reduced trace and reduced discriminant, we shall refer to these simply as the norm, trace and discriminant respectively.

An order of $B$ is said to be hereditary if it has squarefree discriminant (for alternate, equivalent, definitions see  \cite{DR, Voight_quaternions_book}).
We shall take $\OO$ to be a hereditary order in $B$ of discriminant $D$.

 An abelian surface $A/K$ is said to have quaternion multiplication by $\OO$ if there is an isomorphism $\iota \colon \OO \xrightarrow{\sim} \End(A)$.
We shall assume there is an element $\mu \in \OO$ satisfying $\mu^2+D =0$.
This is always the case when $\OO$ is a maximal order \cite[43.6.6, p. 818]{Voight_quaternions_book} and furthermore such an element $\mu$ induces a principal polarisation on $A$ \cite[p. 821]{Voight_quaternions_book}.
Fix such an $A$, $\mu$ and $\iota$.

Following \cite[Definition 3.3]{DR}, we say $\chi \in B$ is a \emph{twist} of $(\OO,\mu)$ if it lies in both $\OO$ and the normaliser $N_{B^*}(\OO)$, has trace zero, $\mathrm{n}(\chi)$ divides $D$, and $\mu\chi = - \chi\mu$.
The existence of such an element can be verified by a finite computation, we refer the reader to \cite[Rem., p. 9]{DR} for further details.

Owing in part to the fact $(\OO,\mu)$ has been fixed, there are only two possible values\footnote{This follows from \cite[Lemmas 3.5 and 3.7]{Rotger_field_moduli}.
Indeed, in the notation of the paper, we have $F=\QQ$ and $D \neq 3$, as $D$ is divisible by an even number of primes.
This forces $\omega_{odd}=1$.
The cited lemmas in turn show $C_2 \cong U_0 \leq V_0 \cong C_2 \times C_2$.
The non-trivial element of $U_0$ can be represented by $\mu = \sqrt{-D}$.
This allows us to write $V_0 = \langle [\mu], [\chi] \rangle$ where the representative $\chi$ may be taken to have reduced norm $m | D$.

Moreover, any representative of $[\chi]$ has reduced norm $m$ up to a rational square.
Likewise, representatives of the class $[\mu \chi]$ have reduced norm $D/m$ up to rational squares.
} $\mathrm{n}(\chi)$ can take and the product of these values is equal to $D$. 
We call $(\OO,\mu, \chi)$ a \emph{twisted principally polarised order}, and say $(\OO, \mu, \chi)$ is of discriminant $D$ and norm $-\mathrm{n}(\chi)$.

As before, we let $L$ denote the endomorphism field of $A$.
Dieulefait and Rotger \cite[Thm. 3.4]{DR} showed $\Gal(L/K)$ is isomorphic to one of the trivial group, $C_2$, or $C_2 \times C_2$.
Moreover, they proved in each case $\End_K^0(A)$ is respectively isomorphic to $B$; one of $\QQ(\mu), \QQ(\chi), \QQ(\mu \chi) $; or $\QQ$.
This determines (and heavily restricts) the possible images of
\[\Gal(L/K) \hookrightarrow \Aut_\ZZ(\End(A)) \cong N_{B^*}(\OO)/\QQ^*\]
arising from the natural action of $\Gal(L/K)$ on $\End(A)$.

To gain information on the intersection $L \cap K(A[2])$, we will use a description of orders of discriminant $D$ in $B$.

\begin{lemma}
\label{lem:maximalorder3mod4}
Suppose $D$ is even and $m \equiv 3 \mod{4}$. Then \[\OO = \ZZ + \frac{1}{2}(1 + j + k)\ZZ + \frac{1}{2}(1 + j - k)\ZZ + \frac{1}{2}(i+k)\ZZ\] is an order of discriminant $D$ in $B$.
Moreover, any order $\OO'$ of discriminant $D$ which contains $\ZZ[1,mi,j,k]$, satisfies $\OO' \otimes \ZZ_{(2)} = \OO \otimes \ZZ_{(2)}$.
\end{lemma}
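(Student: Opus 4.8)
The plan is to verify the two assertions of Lemma~\ref{lem:maximalorder3mod4} by a direct, largely local, computation at the prime $2$. First I would check that the given $\mathbb{Z}$-module $\OO$ is indeed an order: one confirms $1 \in \OO$, that the four listed elements form a $\mathbb{Z}$-basis (by checking the change-of-basis matrix with respect to $1,i,j,k$ has determinant a power of $2$, reflecting that $\frac12(i+k)$ etc.\ have half-integer coordinates), and that $\OO$ is closed under multiplication. The last point uses the relations $i^2 = D/m$, $j^2 = m$, $k^2 = -D$, $ij = k = -ji$, $jk = mi$, $ki = (D/m) j$; here the hypothesis $m \equiv 3 \pmod 4$ (so $m$ is odd) and $2 \mid D$ (so $D/m$ is even) are what make the various half-integer products land back in $\OO$. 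One then computes the reduced discriminant of $\OO$ and checks it equals $D$; since $\OO \otimes \mathbb{Z}_\ell$ is visibly maximal (indeed equal to $M_2(\mathbb{Z}_\ell)$ or the maximal order in the division algebra) at every odd $\ell$, the only real content is at $2$, where one shows the quadratic form $\mathrm{n}$ restricted to $\OO \otimes \mathbb{Z}_2$ has the right discriminant valuation, so that $\OO \otimes \mathbb{Z}_2$ is the hereditary (here maximal, since $2\mid D$) order.

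For the second assertion, fix an order $\OO'$ of discriminant $D$ with $\mathbb{Z}[1, mi, j, k] \subseteq \OO'$. Since the discriminants of $\OO$ and $\OO'$ agree and are equal away from $2$ to the maximal order, it suffices to prove $\OO' \otimes \mathbb{Z}_2 = \OO \otimes \mathbb{Z}_2$ inside $B \otimes \mathbb{Q}_2$. The strategy is: (i) $\OO \otimes \mathbb{Z}_2$ is the \emph{unique} maximal (equivalently, discriminant-$D$ hereditary) order of $B\otimes\mathbb{Q}_2$ containing $\mathbb{Z}_2[1, mi, j, k]$ — uniqueness at $2$ because $2\mid D$ forces $B\otimes \mathbb{Q}_2$ to be the division quaternion algebra, whose maximal order is unique; (ii) $\OO' \otimes \mathbb{Z}_2$ is a maximal order (discriminant $D$, $2\mid D$) containing $\mathbb{Z}_2[1,mi,j,k]$; hence $\OO' \otimes \mathbb{Z}_2 = \OO \otimes \mathbb{Z}_2$ by uniqueness. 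The index computation $[\OO : \mathbb{Z}[1,mi,j,k]]$ should come out to a power of $2$ times $m$ (or just a $2$-power after localizing, since $m$ is odd), which is what guarantees the containment $\mathbb{Z}_2[1,mi,j,k] \subseteq \OO\otimes\mathbb{Z}_2$ is of finite $2$-power index and consistent with both being orders of the same discriminant.

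The main obstacle — really the only delicate point — is handling the local structure at $2$ cleanly: one must be sure that $B \otimes \mathbb{Q}_2$ is ramified (which follows from $D$ being divisible by an even number of primes, $2$ among them, so that $2 \mid D$ and $2 \in \mathrm{Ram}(B)$), invoke uniqueness of the maximal order in a local division quaternion algebra, and then check that $\mathbb{Z}_2[1, mi, j, k]$ is already large enough that only one maximal order can contain it. An alternative, more hands-on route that avoids the uniqueness theorem is to argue directly that any order $\OO'$ containing $\mathbb{Z}[1,mi,j,k]$ has, at $2$, a $\mathbb{Z}_2$-basis obtained from $1, mi, j, k$ by adjoining elements of the form $\frac12(\text{integer combination})$; the trace-zero and norm conditions (together with $m$ odd, $D/m$ even) then pin down exactly which half-integer vectors can appear, forcing $\OO' \otimes \mathbb{Z}_2$ to be spanned by the four generators of $\OO$. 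I would present whichever of these is shorter, probably the uniqueness argument, relegating the elementary multiplication-table verifications to a remark.
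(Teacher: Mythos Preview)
Your ``alternative, more hands-on route'' is precisely what the paper does. The paper dispatches the first assertion as a routine discriminant computation (citing Vign\'eras), and for the second observes that $m$ odd gives $\ZZ_2[1,mi,j,k]=\ZZ_2[1,i,j,k]$, which has reduced discriminant $4D$, so any $\OO'$ of discriminant $D$ contains it with index $4$; an integrality check on the characteristic polynomial then forces $\OO'\subseteq\tfrac12\ZZ_2[1,i,j,k]$, and one simply tests the fifteen nonzero classes $\tfrac12(a+bi+cj+dk)$ with $a,b,c,d\in\{0,1\}$ for integrality. Exactly the three elements $\tfrac12(1+j+k)$, $\tfrac12(1+i+j)$, $\tfrac12(i+k)$ survive, and together with $\ZZ_2[1,i,j,k]$ they span $\OO\otimes\ZZ_2$, so $\OO'\otimes\ZZ_2=\OO\otimes\ZZ_2$.

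Your preferred uniqueness argument is slicker but carries a hidden assumption: the step ``$2\mid D$ forces $B\otimes\QQ_2$ to be the division algebra'' is using $D=\disc(B)$. That is indeed the ambient convention of Section~\ref{Section_Quaternion_Algebras} (QM surfaces, $D$ a product of an even number of primes), but it is not forced by the hypotheses of the lemma taken in isolation. For instance $D=42$, $m=3$ satisfy $2\mid D$, $m\equiv 3\pmod 4$, $D$ squarefree, yet $B=\left(\frac{14,3}{\QQ}\right)$ has discriminant $21$ and is \emph{split} at $2$; your parenthetical justification (``$D$ divisible by an even number of primes, $2$ among them'') does not by itself place $2$ in $\mathrm{Ram}(B)$. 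The paper's elementary enumeration works uniformly and never asks whether $B$ is ramified at $2$. If you want to keep the conceptual route, you should either add the standing hypothesis $\disc(B)=D$ explicitly, or handle the split case separately by showing that among Eichler orders of level $2$ in $M_2(\QQ_2)$ only one contains $\ZZ_2[1,i,j,k]$ --- at which point you are essentially back to the hands-on check.
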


\begin{proof}
It is a routine calculation to show $\OO$ is an order of discriminant $D$ (for an example, see \cite[p. 85 - 86]{vigneras}).




As $m$ is odd we have $ \ZZ[1,i,j,k] \otimes \ZZ_{(2)}=\ZZ[1,mi,j,k] \otimes \ZZ_{(2)} \subseteq \OO'\otimes \ZZ_{(2)}$.
There exists a lattice $\OO''$ such that $\OO''\otimes \ZZ_{(2)} =\OO' \otimes \ZZ_{(2)}$ and $\OO'' \otimes \ZZ_{(p)} = \ZZ[1,i,j,k] \otimes \ZZ_{(p)}$ for every odd prime $p$ \cite[Thm. 9.4.9]{Voight_quaternions_book}.
Moreover, $\OO''$ is in fact an order since every localisation is \cite[Lem. 10.2.10]{Voight_quaternions_book}.
As $\ZZ[1,i,j,k] \otimes \ZZ_{(p)}$ is contained in $\OO'' \otimes \ZZ_{(p)}$ for every prime $p$,  we deduce $\ZZ[1,i,j,k]$ is contained in $\OO''$ \cite[Cor. 9.4.7]{Voight_quaternions_book}.

The discriminant of $\ZZ[1,i,j,k]$ equals $4D$.
As the discriminant of an order is determined locally \cite[15.2.13]{Voight_quaternions_book}, we deduce the discriminant of $\OO''$ equals $D$.
Applying \cite[Lem. 15.2.5]{Voight_quaternions_book}, we see that any element contained in $\OO''$ but not in $\ZZ[1,i,j,k]$ is of the form $\frac{1}{4}\alpha$ with $\alpha \in \ZZ[1,i,j,k]$.

Let us write such an element as $ \frac{1}{4} \alpha = w +xi+yj+zk$.
Considering the trace of $\frac{1}{4} \alpha$, which is an integer \cite[Cor. 10.3.3]{Voight_quaternions_book}, we find $w \equiv 0 \mod{2}$.
Likewise as $\mathrm{n}(\frac{1}{4} \alpha) \in \ZZ$ \cite[Cor. 10.3.3]{Voight_quaternions_book}, we have $\mathrm{n}(\alpha) \equiv 0 \mod{16}$.
Considering $\mathrm{n}(\alpha) \equiv 0 \mod{2}$, we find $y \equiv 0 \mod{2}$, which in turn combined with $\mathrm{n}(\alpha) \equiv 0 \mod{4}$ implies $x^2+z^2 \equiv 0 \mod{4}$ and thus $x , z \equiv 0 \mod{2}$.

Hence it suffices to check elements of the form $\frac{1}{2}(a+bi+cj+dk)$ with $a,b,c,d \in  \{0,1\}$.
We have $\mathrm{n}(a+bi+cj+dk)  \equiv a-2b+c+2d \equiv 0 \mod{4}$, from which we deduce $\frac{1}{2}(1 + j + k)$, $ \frac{1}{2}(1 + i + j)$, $\frac{1}{2}(i+k)$ are the only integral such.
In order for the discriminant of $\OO''$ to equal $D$ we see that all of these elements must belong to $\OO''$.
Whence $\OO \otimes \ZZ_{(2)} = \OO'' \otimes \ZZ_{(2)} = \OO'\otimes \ZZ_{(2)}$.
\end{proof}
 
\begin{lemma}
\label{lem:maximalorder1mod4}
Suppose $m  \equiv 1 \mod{4}$. Then \[\OO = \ZZ + \frac{1}{2}(1 + j)\ZZ + k\ZZ + \frac{1}{2}(i+k)\ZZ\] is an order of discriminant $D$ in $B$. Furthermore, if $D \equiv 1 \mod{4}$ then
\[\OO_1 = \ZZ + \frac{1}{2}(1 + i)\ZZ + j\ZZ + \frac{1}{2}(j+k)\ZZ\] is an order of discriminant $D$ in $B$.
Likewise, if $D \equiv 3 \mod{4}$ then
\[\OO_3 = \ZZ + \frac{1}{2}(1 + k)\ZZ + j\ZZ + \frac{1}{2}(i+j)\ZZ\] is an order of discriminant $D$ in $B$.

Moreover, any order $\OO'$ of even discriminant $D$ which contains $\ZZ[1,mi,j,k]$ satisfies $\OO' \otimes \ZZ_{(2)} = \OO \otimes \ZZ_{(2)}$.
Any order $\OO'$ of discriminant $D \equiv t \mod{4}$ with $t \in \{1,3\}$, which contains $\ZZ[1,mi,j,k]$, satisfies either $\OO' \otimes \ZZ_{(2)} = \OO \otimes \ZZ_{(2)}$ or $\OO' \otimes \ZZ_{(2)} = \OO_t \otimes \ZZ_{(2)}$.
\end{lemma}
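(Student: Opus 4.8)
The plan is to follow the template of the proof of Lemma~\ref{lem:maximalorder3mod4}. The assertions that $\OO$, $\OO_1$, $\OO_3$ are orders of discriminant $D$ are routine: one verifies directly that each listed $\ZZ$-lattice is closed under multiplication — the hypothesis $m\equiv 1\bmod 4$ (and, for $\OO_1$ and $\OO_3$, the value of $D$ modulo $4$) is exactly what makes the relevant products, for instance $\tfrac12(1+j)\cdot\tfrac12(i+k)=\tfrac{1-m}{4}\,i$, land back in the lattice — and then one computes the reduced discriminant by the trace-form determinant calculation illustrated in \cite[pg.~85--86]{vigneras}. Since at each odd prime $\ell$ the completion of every one of these lattices coincides with $\ZZ_\ell[1,i,j,k]$, only the contribution at $2$ needs separate inspection, and it works out to the $2$-part of $D$.

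For the final statement we localise at $2$. If $D$ is even, then $B\otimes\QQ_2$ is a division algebra; as $D$ is squarefree, $v_2(D)=1$, so both $\OO'\otimes\ZZ_2$ and $\OO\otimes\ZZ_2$ have $2$-adic reduced discriminant $2$ and are therefore maximal orders of $B\otimes\QQ_2$; since such a maximal order is unique, $\OO'\otimes\ZZ_2=\OO\otimes\ZZ_2$. Now suppose $D$ is odd, so $B\otimes\QQ_2\cong M_2(\QQ_2)$. As $m$ is odd, $\ZZ[1,mi,j,k]\otimes\ZZ_2=\ZZ_2[1,i,j,k]$, hence we may assume $\OO'\supseteq\ZZ[1,i,j,k]$; since $\ZZ[1,i,j,k]$ has reduced discriminant $4D$ and $\OO'$ has reduced discriminant $D$, the index $[\OO':\ZZ[1,i,j,k]]$ equals $4$. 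The decisive structural claim is that $\OO'/\ZZ[1,i,j,k]$ is an elementary abelian $2$-group, equivalently $\OO'\subseteq\tfrac12\ZZ[1,i,j,k]$; granting it, $\OO'\otimes\ZZ_2$ is determined by a two-dimensional subspace over $\ZZ/2\ZZ$ of $\tfrac12\ZZ_2[1,i,j,k]/\ZZ_2[1,i,j,k]\cong(\ZZ/2\ZZ)^4$, which is spanned by classes of elements $\tfrac12(a+bi+cj+dk)$ with $a,b,c,d\in\{0,1\}$.

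Such an element is integral precisely when $a^2-\tfrac Dm b^2-mc^2+Dd^2\equiv 0\bmod 4$, a congruence depending only on $D$ and $m$ modulo $4$, hence — as $m\equiv 1$ and $D$ is odd — only on whether $D\equiv 1$ or $3\bmod 4$ (recall $D/m\equiv D\bmod 4$). In each of these two cases I would list the finitely many integral classes and determine which two-dimensional subspaces lie inside that list: one finds exactly two. One of them, in both cases, is $\ZZ_2[1,i,j,k]+\ZZ_2\,\tfrac12(1+j)+\ZZ_2\,\tfrac12(i+k)=\OO\otimes\ZZ_2$ — note $i=2\cdot\tfrac12(i+k)-k$ and $j=2\cdot\tfrac12(1+j)-1$, so the stated generators of $\OO$ do generate this order $2$-adically — and the other is $\OO_t\otimes\ZZ_2$ when $D\equiv t\bmod 4$, recovering $i$ and $k$ similarly from the half-integral generators of $\OO_t$. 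Both genuinely occur, since $\OO$ and $\OO_t$ were shown to be orders in the first part, so $\OO'\otimes\ZZ_2\in\{\OO\otimes\ZZ_2,\OO_t\otimes\ZZ_2\}$.

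The step I expect to be the main obstacle is the structural claim that $\OO'/\ZZ[1,i,j,k]$ is killed by $2$; a priori this quotient, of order $4$, could be cyclic. Unlike in Lemma~\ref{lem:maximalorder3mod4}, this cannot be dispatched by examining the reduced characteristic polynomial of a single candidate element: when $m\equiv 1\bmod 16$ the element $\tfrac14(i+k)$ is itself integral. The resolution is that such an element cannot lie in any order containing $\ZZ_2[1,i,j,k]$, because $i\cdot\tfrac14(i+k)=\tfrac{D/m}{4}(1+j)$ has reduced trace $\tfrac{D/m}{2}\notin\ZZ_2$; so one must combine integrality with closure under multiplication, excluding a cyclic quotient by producing, for any putative generator $x$, an element $\eta\in\ZZ_2[1,i,j,k]$ such that $\eta x$ (or $x\eta$) has reduced trace or reduced norm outside $\ZZ_2$. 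Once this is in place, the case analysis of the preceding paragraph is bookkeeping of the same flavour as the end of the proof of Lemma~\ref{lem:maximalorder3mod4}.
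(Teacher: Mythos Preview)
Your treatment of the odd-$D$ case is essentially the enumeration the paper has in mind by ``same strategy as Lemma~\ref{lem:maximalorder3mod4}'', and your observation that the reduced characteristic polynomial alone need not force $\OO'\subseteq\tfrac12\ZZ[1,i,j,k]$ --- so that closure under multiplication must be invoked to exclude a cyclic quotient --- is a valid refinement of what the paper actually writes.

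There is, however, a genuine gap in your shortcut for even $D$. You infer from $2\mid D$ that $B\otimes\QQ_2$ is a division algebra, but $D$ is the reduced discriminant of the \emph{order} $\OO$, not of $B$; nothing in the set-up forces $2$ to divide the discriminant of $B$. Concretely, for $m\equiv 1\bmod 8$ with $2\mid D$ one computes the Hilbert symbol $(D/m,m)_2=(2,m)_2=1$, so $B$ is split at $2$; an order of $2$-adic reduced discriminant $2$ is then an Eichler order of level $2$ in $M_2(\QQ_2)$, and no uniqueness statement is available. Worse, in this same regime your test element $\tfrac14(i+k)$ is integral and $i\cdot\tfrac14(i+k)=\tfrac{D/m}{4}(1+j)$ now has reduced trace $\tfrac{D/m}{2}\in\ZZ_2$, so the specific trace obstruction you propose for odd $D$ also fails here. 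The remedy is to drop the shortcut and run your odd-$D$ argument uniformly: for a putative $\tfrac14$-element, multiplication by $i,j,k$ forces $\OO'$ to contain $\OO$ strictly (e.g.\ the product above yields $\tfrac12(1+j)\in\OO'$ after clearing the $2$-adic unit $\tfrac{D/m}{2}$), contradicting equality of discriminants. Once $\OO'\subseteq\tfrac12\ZZ[1,i,j,k]$ is established, the half-integer enumeration gives exactly one admissible two-dimensional subspace when $D$ is even and two when $D$ is odd, matching the statement.
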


\begin{proof}
The proof follows the same strategy as for Lemma \ref{lem:maximalorder3mod4}.
\end{proof}

\begin{theorem}
\label{thm:endofield3mod4}
Let $A/K$ be an abelian surface with QM by a twisted principally polarised order $(\OO, \mu, \chi)$ of discriminant $D$ and norm $m$, where $ \OO \subseteq B$.

If $D$ is even and $m  \equiv 3 \mod{4}$, then $L \subseteq K(A[2])$.
\end{theorem}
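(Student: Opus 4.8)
The plan is to reduce the statement to the faithfulness input used in Theorem \ref{thm:endo_field_contained_in_2_torsion_field_when_endos_are_totally_inert_at_2}. As in the first part of that proof, if $\Gal(L/K)$ acts faithfully on $\End(A)\otimes\ZZ/2\ZZ$, then Silverberg's Lemma 2.1 together with the proofs of Propositions 2.2, 2.3 and Theorem 2.4 of \cite{Silverberg}, applied with $N=2$ and $\Lambda=\End(A)=\OO$, give $L\subseteq K(A[2])$. By \cite[Theorem 1.3]{DR}, recalled above, $\Gal(L/K)$ is trivial, $C_2$ or $C_2\times C_2$, and $\End^0_K(A)$ — the $\QQ$-span of the subring $\End_K(A)=\OO^{\Gal(L/K)}$ — is accordingly $B$, one of $\QQ(\mu),\QQ(\chi),\QQ(\mu\chi)$, or $\QQ$; correspondingly the image of $\Gal(L/K)$ under $\Gal(L/K)\to\Aut(\End(A))\cong N_{B^*}(\OO)/\QQ^*$ is trivial, one of $\langle[\mu]\rangle,\langle[\chi]\rangle,\langle[\mu\chi]\rangle$, or $\langle[\mu],[\chi]\rangle$ (with $[\mu\chi]=[\mu][\chi]$, since conjugation by $\mu\chi$ is conjugation by $\mu$ followed by conjugation by $\chi$). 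Hence it suffices to show that conjugation by each of $\mu$, $\chi$ and $\mu\chi$ is non-trivial on $\OO/2\OO$.

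I would then work $2$-adically. Since $2\mid D$, the algebra $B_2=B\otimes\QQ_2$ is the quaternion division algebra over $\QQ_2$ and $\mathcal O:=\OO\otimes\ZZ_2$ is its unique maximal order (with an explicit $\ZZ_2$-basis supplied by Lemma \ref{lem:maximalorder3mod4}): a non-commutative local ring with maximal ideal $\mathfrak P$, $\mathfrak P^2=2\mathcal O$, residue field $\mathcal O/\mathfrak P\cong\mathbb F_4$, and such that $\mathcal O/2\mathcal O$ has center $\mathbb F_2$ (the image of $\ZZ_2$). For $x\in\mathcal O$ one has $x\in 2\mathcal O$ if and only if $v_2(\mathrm n(x))\geq 2$. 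Now $\mathrm n(\mu)=D$ (from $\mu^2+D=0$), $\mathrm n(\chi)=-m$ (equivalently $\chi^2=m$, the triple $(\OO,\mu,\chi)$ having norm $m=-\mathrm n(\chi)$), and $\mathrm n(\mu\chi)=\mathrm n(\mu)\mathrm n(\chi)=-Dm$; since $D$ is squarefree and $m\equiv 3\bmod 4$ is odd, $v_2(\mathrm n(\mu))=v_2(\mathrm n(\mu\chi))=1$, so $\mu$ and $\mu\chi$ are uniformizers of $\mathcal O$, whereas $\chi\in\mathcal O^{*}$.

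The two uniformizer cases are immediate: conjugation by any uniformizer of $\mathcal O$ induces the non-trivial Frobenius automorphism on the residue field $\mathbb F_4=\mathcal O/\mathfrak P$ — the unramified quadratic subring is moved non-trivially, and the unit part of the uniformizer acts trivially on the commutative $\mathbb F_4$ — hence it is already non-trivial modulo $\mathfrak P$, a fortiori modulo $2\mathcal O$. This settles $\mu$ and $\mu\chi$.

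The remaining, and decisive, case is $\chi$, and here $m\equiv 3\bmod 4$ is essential. Being a unit, $\chi$ conjugates trivially on $\mathcal O/2\mathcal O$ exactly when $\chi\bmod 2\mathcal O$ is central, i.e.\ (the center being $\mathbb F_2$ and $\chi$ a unit) exactly when $\chi\equiv 1\pmod{2\mathcal O}$. But $\chi-1\in\mathcal O$ has reduced norm $\mathrm n(\chi-1)=(\chi-1)(-\chi-1)=1-\chi^2=1-m$ (using $\bar\chi=-\chi$), and $v_2(1-m)=1$ precisely because $m\equiv 3\bmod 4$; hence $\chi-1\notin 2\mathcal O$, so $\chi\not\equiv 1\pmod{2\mathcal O}$ and conjugation by $\chi$ is non-trivial on $\OO/2\OO$. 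Therefore $\Gal(L/K)$ acts faithfully on $\OO/2\OO$ and $L\subseteq K(A[2])$. I expect this last step to be the only genuine difficulty: pinning down exactly when conjugation by the unit $\chi$ degenerates modulo $2\mathcal O$. It is also where the congruence $m\equiv 3\bmod 4$ cannot be relaxed — for $m\equiv 1\bmod 4$ one has $v_2(1-m)\geq 2$, so $\chi$ may well satisfy $\chi\equiv 1\pmod{2\mathcal O}$, which is precisely the behaviour treated separately in Theorem \ref{thm:endofield1mod4}.
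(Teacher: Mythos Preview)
Your argument is correct and reaches the same destination as the paper, but by a genuinely different route. The paper proceeds by brute force: it invokes Lemma~\ref{lem:maximalorder3mod4} to pin down an explicit $\ZZ$-basis $1,X,Y,Z$ of $\OO$ (after identifying $\mu=k$, $\chi=j$, $\mu\chi=mi$), then writes out $iXi^{-1},iYi^{-1},\dots,jZj^{-1}$ in that basis and simply inspects the coefficients modulo~$2$ to see that no non-trivial element acts trivially. Your approach replaces this computation with structure theory of the local division algebra $B_2$: the two elements of norm-valuation~$1$ ($\mu$ and $\mu\chi$) are uniformizers and hence conjugate the residue field $\mathbb F_4$ by Frobenius, while the unit $\chi$ is handled by the observation that the center of $\mathcal O/2\mathcal O$ is $\mathbb F_2$, so triviality would force $\chi\equiv 1\pmod{2\mathcal O}$, contradicted by $v_2(\mathrm n(\chi-1))=v_2(1-m)=1$.

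What each buys: the paper's calculation is entirely elementary and self-contained once Lemma~\ref{lem:maximalorder3mod4} is in hand, and it treats all three elements uniformly. Your argument is shorter, basis-free, and makes transparent \emph{why} the hypothesis $m\equiv 3\pmod 4$ enters --- it is exactly the condition $v_2(1-m)<2$ that keeps $\chi$ away from the center of $\mathcal O/2\mathcal O$; this also explains conceptually the dichotomy with Theorem~\ref{thm:endofield1mod4}. On the other hand, you are implicitly using a couple of standard facts about maximal orders in local division algebras (uniqueness of the maximal order, the Frobenius action of a uniformizer on the residue field, the description of the center of $\mathcal O/\mathfrak P^2$) that the paper's computation avoids.
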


\begin{proof}
If $L=K$, then there is nothing left to show.
Thus let us suppose $L \neq K$.
As $\End(A)$ is a hereditary order, the results of \cite[Thm. 3.4]{DR} apply to show $\Gal(L/K) \leq C_2^2$ and a non-trivial element of $\Gal(L/K)$ acts on $\End(A)$ (possibly after scaling) by conjugation as one of $\mu,\chi$ or $\mu\chi$.

We look to determine the action of these elements on $\End(A) \otimes \ZZ/2\ZZ$.
To do so we shall consider a $\ZZ_{(2)}$-basis of $\End(A) \otimes\ZZ_{(2)}$ reduced modulo $2$.
By considering the algebraic relations they satisfy, we may assume $\mu = k$, $\chi =j $ and $\mu\chi = mi$.
Hence Lemma \ref{lem:maximalorder3mod4} allows us to take $\End(A) \otimes \ZZ_{(2)} = \ZZ_{(2)} + \frac{1}{2}(1 + j + k)\ZZ_{(2)} + \frac{1}{2}(1 + j - k)\ZZ_{(2)} + \frac{1}{2}(i+k)\ZZ_{(2)}$.

Let $X = \frac{1}{2}(1 + j + k)$, $Y = \frac{1}{2}(1 + j - k)$ and $Z = \frac{1}{2}(i+k)$.
Let us examine the action of $i,j$ and $k$ on the $\ZZ_{(2)}$-basis of $\End(A) \otimes \ZZ_{(2)}$ given by $1,X,Y,Z$.
Each of $i,j$ and $k$ fix $1$. For $i$ we have $iXi^{-1}=1-X$, $iYi^{-1}= 1 - Y$ and $iZi^{-1} = Z+Y-X$. For $j$ we have $jXj^{-1}=Y$, $jYj^{-1}=X$, $jZj^{-1}=-Z$.
Looking at the coefficients, we see the action remains faithful on $\End(A)\otimes \ZZ/2\ZZ$.

Recall that the natural map $\End(A) \rightarrow \End(A[2])$ induces an injection $\End(A) \otimes \ZZ/2\ZZ \hookrightarrow \End(A[2])$.
The group $\Gal(\overline{K}/K(A[2]))$ acts trivially on $\End(A[2])$ and hence trivially on $\End(A) \otimes \ZZ/2\ZZ$ as well.
We deduce that $\Gal(\overline{K}/K(A[2])) \subseteq \Gal(\overline{K}/L)$, which implies $L \subseteq K(A[2])$.
\end{proof}

\begin{example}
The hereditary assumption is necessary. The following example shows not only $L$ need not be contained in $\QQ(A[2])$ for a non-hereditary order, but also the result of Dieulefait and Rotger fails.

Let $J$ be the jacobian defined by the hyperelliptic curve associated to $y^2 +y=6x^5 +9x^4 -x^3 -3x^2$ with LMFDB label \href{https://www.lmfdb.org/Genus2Curve/Q/20736/l/373248/1}{20736.l.373248.1}.
This surface has QM by an order of (reduced) discriminant $6^2$ in $\left(\frac{2,3}{\QQ}\right)$. In particular, the order is not hereditary.
The endomorphism field has defining polynomial $x^8 + 4x^6 + 10x^4 + 24x^2 + 36$ and the two torsion field of $J$ is $\QQ(\sqrt{2}, \sqrt{3})$.

This curve, made easily available on the LMFDB \cite{lmfdb}, was first found in \cite{LMFDB_genus2_curves}. Computations linked to its endomorphism algebra were carried out in \texttt{Magma} using code from \cite{Costa_Mascot_Sijsling_Voight_compute_endos}.
\end{example}

For ease of notation, let $F = K(A[2])$.

\begin{theorem}
\label{thm:endofield1mod4}
Let $A/K$ be an abelian surface with QM by a twisted principally polarised order $(\OO, \mu, \chi)$ of discriminant $D$ and norm $m$, where $ \OO \subseteq B$.

Suppose $m \equiv 1 \mod{4}$.
Then

\begin{itemize}
    \item for $D$ even, $\End^0_{F}(A)$ contains $\QQ(\sqrt{m})$;
    \item for $D \equiv 1 \mod{4}$, $\End^0_{F}(A)$ contains at least one of $\QQ(\sqrt{m})$ and $\QQ(\sqrt{D/m})$;
    \item for $D \equiv 3 \mod{4}$, $\End^0_{F}(A)$ contains at least one of $\QQ(\sqrt{m})$ and $\QQ(\sqrt{-D})$.;
\end{itemize}
\end{theorem}

\begin{proof}
If $L=K$, then there is nothing left to show.
Thus let us suppose $L \neq K$.
As $\End(A)$ is a hereditary order, the results of \cite[Thm. 3.4]{DR} apply to show $\Gal(L/K) \leq C_2^2$ and a non-trivial element of $\Gal(L/K)$ acts on $\End(A)$ (possibly after scaling) by conjugation as one of $\mu,\chi$ or $\mu\chi$.

We look to determine the action of these elements on $\End(A) \otimes \ZZ/2\ZZ$.
To do so we shall consider a $\ZZ_{(2)}$-basis of $\End(A) \otimes\ZZ_{(2)}$ reduced modulo $2$.
By considering the algebraic relations they satisfy, we may assume $\mu = k$, $\chi =j $ and $\mu\chi = mi$.
Lemma \ref{lem:maximalorder1mod4} allows us to take $\End(A) \otimes \ZZ_{(2)}$ equal to one of the three given orders tensored by $\ZZ_{(2)}$.
As these orders differ by permuting $i,j$ and $k$, we only give details for the case $\End(A) \otimes \ZZ_{(2)} = \ZZ_{(2)} + \frac{1}{2}(1 + j)\ZZ_{(2)} + k\ZZ_{(2)} + \frac{1}{2}(i+k)\ZZ_{(2)}$.

Let $X = \frac{1}{2}(1 + j)$, $Y = k$ and $Z = \frac{1}{2}(i+k)$.
Let us examine the action of $i,j$ and $k$ on the $\ZZ_{(2)}$-basis of $\End(A) \otimes \ZZ_{(2)}$ given by $1,X,Y,Z$.
Each of $i,j$ and $k$ fix $1$. For $i$ we have $iXi^{-1}=1-X$, $iYi^{-1}= - Y$ and $iZi^{-1} = Z-Y$. For $j$ we have $jXj^{-1}=X$, $jYj^{-1}=-Y$, $jZj^{-1}=-Z$. Looking at the coefficients, we see $j$ acts trivially on $\End(A)\otimes \ZZ/2\ZZ$ whereas $i$ and $k$ act by the same involution.

Let $L'$ be the field fixed by the kernel of the action of $G_K$ on $\End^0(A) \otimes \ZZ/2\ZZ$.
As $\End(A) \otimes \ZZ/2\ZZ \hookrightarrow \End(A[2])$ we have $L' \subseteq F$.
The above calculation shows $L/L'$ is an at most quadratic extension.
In particular, by \cite[Thm. 3.4]{DR} the ring of endomorphisms $\End_{L'}^0(A)$ contains at least one of $\QQ(\sqrt{m}), \QQ(\sqrt{-D}),\QQ(\sqrt{D/m})$.

Either all elements of $\Gal(\overline{K}/L')$ fix $\End^0(A)$ or there exists $\tau \in \Gal(\overline{K}/L')$ which acts on $\End^0(A)$ via conjugation by $j$.
In the first case, $\End^0_{L'}(A)=\End^0(A)$, so we may suppose the latter case holds.
Let $x \in \End^0_{L'}(A)$, which by an abuse of notation, we also view as being an element of $B$ via the isomorphism $\End^0(A) \cong B$.
As $\tau$ fixes $ \End^0_{L'}(A)$, we have $x  = \tau \cdot x = jxj^{-1}$.
Thus $x$ commutes with $j$.
As $\QQ(j)$ is a maximal commutative subalgebra of $B$, we deduce $x \in \QQ(j)$, so under the above isomorphism $ \End^0_{L'}(A) \subseteq \QQ(j)$.
We thus deduce  $ \End^0_{L'}(A) = \QQ(j) \cong \QQ(\sqrt{m})$.
\end{proof}

\begin{example}
The jacobian $J$ of the hyperelliptic curve $y^2=-2x^6-12x^5-21x^4-10x^3-3x^2+6x+1$ has QM by the maximal order of $B = \left(\frac{3,5}{\QQ}\right)$ which has discriminant 15.
The endomorphism field of $J$ is $L= \QQ(\sqrt{3}, \sqrt{-3})$, we have $L \cap \QQ(A[2]) = \QQ(\sqrt{-3})$ and $\End^0_{\QQ(\sqrt{-3})}(J) = \QQ(\sqrt{-15})$. These calculations were performed in \texttt{Magma} using code from \cite{Costa_Mascot_Sijsling_Voight_compute_endos}. The curve comes from \cite{Quaternionic_modular_threefold}.
\end{example}



\bibliographystyle{alpha}
\bibliography{bib}
\end{document}